\titleformat{\chapter}{\normalfont\huge}{\thechapter.}{20pt}{\huge\it}
\renewcommand{\paragraph}{\subsection}
\theoremstyle{plain}
\newtheorem{theorem}{Theorem}[subsection]   
\newtheorem{corollary}[theorem]{Corollary}
\newtheorem{proposition}[theorem]{Proposition}
\theoremstyle{definition}
\newtheorem{definition}[theorem]{Definition}
\newtheorem{example}[theorem]{Example}
\theoremstyle{remark}
\newtheorem{remark}[theorem]{Remark}
\titleformat*{\paragraph}{\itshape\mdseries}
\titleformat*{\subsection}{\itshape\mdseries}
\newcommand{\mr}{\mathrm}
\newcommand{\C}{\mathbf C}
\newcommand{\canl}{\tau^{\le p}}
\newcommand{\cang}{\tau^{\ge p}}
\newcommand{\3}{\vspace{3mm}}
\newcommand{\2}{\vspace{2mm}}
\newcommand{\Nb}{\mathscr{N}^{\vee}_\mr{Z/X}}
\newcommand*{\shom}{\mathscr{H}\text{\kern -3pt {\it{om}}}\,}
\newcommand*{\ext}{\mathscr{E}\text{\kern -1.5pt {\it{xt}}}\,}
\newcommand{\xdashrightarrow}[2][]{\ext@arrow 0359\rightarrowfill@@{#1}{#2}}
\newcommand{\xdashleftarrow}[2][]{\ext@arrow 3095\leftarrowfill@@{#1}{#2}}
\newcommand{\xdashleftrightarrow}[2][]{\ext@arrow 3359\leftrightarrowfill@@{#1}{#2}}
\def\rightarrowfill@@{\arrowfill@@\relax\relbar\rightarrow}
\def\leftarrowfill@@{\arrowfill@@\leftarrow\relbar\relax}
\def\leftrightarrowfill@@{\arrowfill@@\leftarrow\relbar\rightarrow}
\def\arrowfill@@#1#2#3#4{%
  $\m@th\thickmuskip0mu\medmuskip\thickmuskip\thinmuskip\thickmuskip
   \relax#4#1
   \xleaders\hbox{$#4#2$}\hfill
   #3$%
}
\title{Degeneration of spectral sequences and complex Lagrangian submanifolds}
\author{Borislav Mladenov}
\date{} 
\begin{document}
\maketitle
\nocite{*}

 \begin{abstract}
  There is a local-to-global $\mr{Ext}$ spectral sequence $\mr{E}_{2}^{p,q}= \mr{H}^{p}(\mr{L},\Omega^{q}_\mr{L}) \Rightarrow \mr{Ext}^{p+q}(i_{*}\mathscr{O}_\mr{L},i_{*}\mathscr{O}_{\mr{L}})$  for a smooth Lagrangian subvariety in a hyperkähler variety. We prove its degeneration on $\mr{E}_{2}$, and various generalisations thereof.
 \end{abstract}

\paragraph{Introduction.}Let $i:\mr{Z \xhookrightarrow{} X}$ be a locally complete intersection over an algebraically closed field $k$ of characteristic $0$ with normal bundle $\mathscr{N}_\mr{Z/X}$. For any $\mathscr{L} \in \mr{Pic}(\mr{Z})$, there's a local-to-global $\mr{Ext}$ spectral sequence: \begin{equation*}\mr{E}_{2}^{p,q}=\mr{H}^{p}(\mr{Z},\wedge^{q}\mathscr{N}_\mr{Z/X}) \Rightarrow \mr{Ext}^{p+q}(i_{*}\mathscr{L},i_{*}\mathscr{L}).\end{equation*} 
Notice that the left-hand side gives the de Rham cohomology of $\mr{L}$, independent of $\mathscr{L}$, but the differentials depend on $\mathscr{L}$. We are interested in the case where $i: \mr{L \xhookrightarrow{} X}$ is a smooth Lagrangian in a hyperkähler variety over $k$. In this case we have $\mathscr{N}_\mr{L/X} \cong \Omega_{L}$, the second page therefore becomes $\mr{E}_{2}^{p,q} = \mr{H}^{p}(\mr{L},\Omega^{q}_\mr{L})$. Our main results go as follows:
\2
\begin{theorem}\label{mainthm}
 Let $\mr{X}/k$ be a (projective) hyperkähler\footnote{(See \cref{sec2}, \cref{algkahler})} variety, let $i : \rm L \xhookrightarrow{} X$ be a smooth Lagrangian, denote the Kähler form on $\mr{L}$ by $\omega \in \mr{H^{1}(L,\Omega^{1}_{L})}$, and suppose $\mathscr{L}$ is a line bundle on $\mr{L}$. Then the local-to-global $\mr{Ext}$ spectral sequence  $$\mr{E}_{2}^{p,q}= \mr{H}^{p}(\mr{L},\Omega^{q}_\mr{L}) \Rightarrow \mr{Ext}^{p+q}(i_{*}\mathscr{L},i_{*}\mathscr{L})$$ degenerates on the second page if and only if $\rm d_{2}(\omega)=0$.
\end{theorem}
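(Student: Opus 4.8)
The plan is to exploit the multiplicative structure of the spectral sequence together with the Hard Lefschetz $\mathfrak{sl}_2$-action furnished by $\omega$. First I would record that $R\shom(i_{*}\mathscr{L},i_{*}\mathscr{L})$ is a sheaf of dg-algebras, so the local-to-global spectral sequence is multiplicative and each differential $d_{r}$ is a derivation of the bigraded-commutative algebra $E_{r}$. On $E_{2}=\bigoplus_{p,q}H^{p}(L,\Omega^{q}_{L})$ this is the cup-wedge product, i.e. the Hodge cohomology ring, which by the characteristic-$0$ Hodge decomposition is the de Rham ring $H^{*}(L)$ equipped with the Lefschetz operator $\ell_{\omega}:=\omega\cup(-)$ of bidegree $(1,1)$. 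The ``only if'' direction is then immediate: degeneration on $E_{2}$ forces every differential, in particular $d_{2}(\omega)$, to vanish.

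For the converse the bridge is the derivation identity $d_{2}(\omega\cup x)=d_{2}(\omega)\cup x+\omega\cup d_{2}(x)$ (the sign is $+$ since $\omega$ has even total degree $2$), which says
$$[d_{2},\ell_{\omega}]=d_{2}(\omega)\cup(-).$$
Thus $d_{2}(\omega)=0$ is exactly the assertion that $d_{2}$ commutes with the Lefschetz operator. I would then bring in the precise shape of $d_{2}$. A derivation of $E_{2}$ of bidegree $(2,-1)$ is interior contraction $\iota_{\xi}$ against a class $\xi\in H^{2}(L,T_{L})$, and via the Arinkin--C\u{a}ld\u{a}raru description of the derived self-intersection $Li^{*}i_{*}\mathscr{O}_{L}$ one identifies $\xi$ with the Hochschild--Kostant--Rosenberg obstruction class (the shifted second fundamental form of $L$, twisted by the Atiyah class of $\mathscr{L}$). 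In particular $d_{2}(\omega)=\iota_{\xi}\omega\in H^{3}(L,\mathscr{O}_{L})$ is the pairing of $\xi$ against $\omega$.

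The heart of the argument -- and the step I expect to be the main obstacle -- is to upgrade ``$d_{2}$ commutes with $\ell_{\omega}$'' to ``$d_{2}=0$''. Naive $\mathfrak{sl}_{2}$-representation theory is \emph{not} enough: an operator that commutes with the raising operator and raises the Lefschetz weight by one (as $d_{2}$ does) need not vanish, so the formal structure alone fails, and one really must use the contraction form of $d_{2}$. I would combine three ingredients: (i) the commutation $[d_{2},\ell_{\omega}]=0$; (ii) graded self-adjointness of $d_{2}$ for the Serre/Poincar\'e-duality pairing on $E_{2}$, valid because $X$ is Calabi--Yau and the spectral sequence is self-dual; and (iii) the fact that $d_{2}=\iota_{\xi}$ is determined by the single class $\xi$ and its contractions. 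Using the Lefschetz decomposition I would reduce the vanishing of $\iota_{\xi}$ to its action on primitive cohomology, track this action through the primitive filtration by means of (i), and then use (ii) to show that the primitive components of $d_{2}$ are all measured by $\iota_{\xi}\omega=d_{2}(\omega)$; the non-degeneracy of $\omega$ on primitive cohomology finally forces $\xi=0$, hence $d_{2}=0$.

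It remains to propagate the vanishing up the page. Once $d_{2}=0$ we have $E_{3}=E_{2}$, and since $d_{2}(\omega)=0$ the class $\omega$ is a permanent cycle, so the Lefschetz $\mathfrak{sl}_{2}$-structure descends to every $E_{r}$ and the identity $[d_{r},\ell_{\omega}]=d_{r}(\omega)\cup(-)$ persists. The higher obstruction classes controlling the $d_{r}$ are built from $\xi$ and vanish once $\xi$ does; alternatively I would rerun the Hard-Lefschetz-plus-duality argument page by page, using that $\omega$ remains a nonzero permanent cycle, to conclude $d_{r}=0$ for all $r\ge 2$. This yields $E_{2}=E_{\infty}$ and completes the converse.
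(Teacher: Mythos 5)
Your skeleton agrees with the paper's at the endpoints: the ``only if'' direction is trivial, the derivation identity $[\mr{d}_2,\mr{L}_\omega]=\mr{d}_2(\omega)\cup(-)$ reduces the converse to an operator commuting with the Lefschetz operator, and the higher pages are handled by rerunning the argument inductively. But the heart of your converse has a genuine gap. You propose to conclude by showing that the class $\xi\in\mr{H}^2(\mr{L},\mathscr{T}_\mr{L})$ controlling $\mr{d}_2$ vanishes (``the non-degeneracy of $\omega$ on primitive cohomology finally forces $\xi=0$''). This is both false in general and strictly more than the theorem asserts. Degeneration means the \emph{induced maps} $\mr{H}^{p}(\mr{L},\Omega^{q}_\mr{L})\to\mr{H}^{p+2}(\mr{L},\Omega^{q-1}_\mr{L})$ vanish, not the underlying $\mr{Ext}$ class: for $\mathscr{L}=\mathscr{O}_\mr{L}$ the spectral sequence degenerates (\cref{cor}), yet the controlling HKR-type class $\alpha_{\mathscr{N}^{\vee}_{\mr{L/X}}}$ is in general nonzero --- Arinkin--C\u{a}ld\u{a}raru show that $i^{*}i_{*}\mathscr{O}_\mr{L}$ is formal precisely when such classes vanish, and formality fails for general Lagrangians. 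Nor can $\mr{d}_2(\omega)=\iota_{\xi}\omega=0$ force $\xi=0$: the map $\mr{H}^2(\mr{L},\mathscr{T}_\mr{L})\to\mr{H}^3(\mr{L},\mathscr{O}_\mr{L})$, $\xi\mapsto\iota_{\xi}\omega$, can have large kernel (its target may even be zero). Relatedly, your claim that ``the formal structure alone fails, and one really must use the contraction form of $\mr{d}_2$'' is mistaken: the paper's proof of this theorem is purely formal, using only multiplicativity and hard Lefschetz; the obstruction-class identification of $\mr{d}_2$ is used only later, to verify the hypothesis $\mr{d}_2(\omega)=0$ in \cref{cor} and \cref{thm13m}.

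The mechanism your sketch is missing runs as follows. Since $\mr{d}_2$ commutes with $\mr{L}_\omega$, on primitive classes one has $\mr{d}_2(\mr{H}_0^{p}(\mr{L},\Omega^q_\mr{L}))\subset\mr{ker}(\mr{L}_\omega^{n-p-q+1})=\mr{H}_0^{p+2}(\mr{L},\Omega^{q-1}_\mr{L})\oplus\mr{L}_\omega\mr{H}_0^{p+1}(\mr{L},\Omega^{q-2}_\mr{L})$, so $\mr{d}_2=\mr{d}_2^{o}+\mr{L}_\omega\mr{d}_2'$. The component $\mr{d}_2'$ has total degree $-1$, so Deligne's observation applies to it (the left vertical $\mr{L}_\omega^{n-p-q+1}$ kills primitives, the right one is injective by hard Lefschetz), giving $\mr{d}_2'=0$: thus $\mr{d}_2$ preserves primitive cohomology and vanishes in middle total degree $p+q=n$. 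One then descends by downward induction on total degree $k$: for primitive $\alpha\in\mr{H}_0^{p}(\mr{L},\Omega^q_\mr{L})$ with $p+q=k$ and any primitive $\beta\in\mr{H}_0^{q-1}(\mr{L},\Omega^{p+2}_\mr{L})$, the class $\mr{L}_\omega^{n-k-1}(\alpha\cup\beta)$ vanishes for bidegree reasons (its form degree exceeds $n$), so the derivation property together with the inductive vanishing of $\mr{d}_2\beta$ in degree $k+1$ yields $\mr{L}_\omega^{n-k-1}(\mr{d}_2\alpha\cup\beta)=0$; the non-degeneracy of the pairing $\gamma\otimes\beta\mapsto\int_\mr{L}\omega^{n-k-1}\cup\gamma\cup\beta$ on primitive pieces then gives $\mr{d}_2\alpha=0$. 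This inductive pairing step --- not the vanishing of any obstruction class --- is what upgrades commutation with $\mr{L}_\omega$ to $\mr{d}_2=0$, and the same argument repeats verbatim on every later page.
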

\2
\begin{theorem}\label{thm12}
 Let $\mr{X}/k$ be a (projective) hyperkähler variety, $i : \mr{L \xhookrightarrow{} X}$ a smooth Lagrangian, and let $\mathscr{L}$ be a line bundle on $\mr{L}$, extending to the first infinitesimal neighbourhood of $\mr{L}$ in $\mr{X}$, such as $\mathscr{O}_{\mr{L}}$. Then the local-to-global $\mr{Ext}$ spectral sequence  $$\mr{E}_{2}^{p,q}= \mr{H}^{p}(\mr{L},\Omega^{q}_\mr{L}) \Rightarrow \mr{Ext}^{p+q}(i_{*}\mathscr{L},i_{*}\mathscr{L})$$
 degenerates on the second page. Hence $\mr{H}^{*}(\mr{L}/k) = \oplus_{p,q}\mr{H}^{p}(\mr{L},\Omega^{q}_\mr{L}) = \mr{Ext}(i_{*}\mathscr{L},i_{*}\mathscr{L})$.
\end{theorem}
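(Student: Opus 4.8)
The plan is to deduce the theorem from \cref{mainthm}. That result reduces degeneration to the single vanishing $\mr d_2(\omega)=0$ for the Kähler class $\omega\in\mr H^1(\mr L,\Omega^1_\mr L)=\mr E_2^{1,1}$, so it suffices to establish this vanishing from the hypothesis that $\mathscr L$ extends across the first infinitesimal neighbourhood $\mr L^{(1)}$ of $\mr L$ in $\mr X$; the distinguished case $\mathscr L=\mathscr O_\mr L$ is included because $\mathscr O_\mr L$ extends to $\mathscr O_{\mr L^{(1)}}$.

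First I would describe $\mr d_2$ conceptually. Writing $\mr{Ext}^\bullet(i_*\mathscr L,i_*\mathscr L)=\mr{Hom}_\mr L(\mr Li^*i_*\mathscr L,\mathscr L[\bullet])$, the complex $\mr Li^*i_*\mathscr L$ has cohomology sheaf $\wedge^q\mathscr N^\vee_{\mr L/\mr X}\otimes\mathscr L$ in degree $-q$, so after dualising against $\mathscr L$ the local $\mr{Ext}$ sheaves are $\wedge^q\mathscr N_{\mr L/\mr X}$ and the second page is $\mr E_2^{p,q}=\mr H^p(\mr L,\wedge^q\mathscr N_{\mr L/\mr X})$. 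Since $\mr d_2\colon\mr E_2^{p,q}\to\mr E_2^{p+2,q-1}$ lowers exterior degree by one, it is contraction $\iota_\gamma$ against the class $\gamma\in\mr H^2(\mr L,\mathscr N^\vee_{\mr L/\mr X})$ given by the first $k$-invariant of $\mr Li^*i_*\mathscr L$: the obstruction to splitting the two-step truncation $\tau_{\ge-1}\mr Li^*i_*\mathscr L$, an extension of $\mathscr L$ by $(\mathscr N^\vee_{\mr L/\mr X}\otimes\mathscr L)[1]$, is the desired element of $\mr{Ext}^2_\mr L(\mathscr L,\mathscr N^\vee_{\mr L/\mr X}\otimes\mathscr L)=\mr H^2(\mr L,\mathscr N^\vee_{\mr L/\mr X})$.

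The key step is to identify $\gamma$ with the obstruction $o(\mathscr L)$ to lifting $\mathscr L\in\mr{Pic}(\mr L)$ through the square-zero thickening $0\to\mathscr N^\vee_{\mr L/\mr X}\to\mathscr O_{\mr L^{(1)}}\to\mathscr O_\mr L\to0$. In general the $k$-invariant carries an extra $\mathscr L$-independent term — the Arinkin–Căldăraru non-formality class of $\mr Li^*i_*\mathscr O_\mr L$, which for an arbitrary local complete intersection is exactly what can prevent degeneration — and the hard part is to show that this intrinsic term drops out in the Lagrangian setting. This is where the symplectic form is essential: it furnishes the isomorphism $\mathscr N^\vee_{\mr L/\mr X}\cong T_\mr L$ and makes the first-order neighbourhood of $\mr L$ linear, so that $\mathscr N^\vee_{\mr L/\mr X}$ extends to $\mr L^{(1)}$ and the intrinsic class vanishes; equivalently, one shows directly that $\mr d_2=0$ for $\mathscr O_\mr L$. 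Granting this, $\mr d_2=\iota_{o(\mathscr L)}$, and the hypothesis $o(\mathscr L)=0$ forces $\mr d_2=0$ on the whole page, in particular $\mr d_2(\omega)=0$; \cref{mainthm} then gives degeneration.

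Finally, the displayed identity is formal: degeneration yields $\dim_k\mr{Ext}^n(i_*\mathscr L,i_*\mathscr L)=\sum_{p+q=n}\dim_k\mr H^p(\mr L,\Omega^q_\mr L)$, and Hodge-to-de Rham degeneration for the smooth projective variety $\mr L$ equates the right-hand sum with $\dim_k\mr H^n(\mr L/k)$, giving $\mr H^*(\mr L/k)=\oplus_{p,q}\mr H^p(\mr L,\Omega^q_\mr L)=\mr{Ext}(i_*\mathscr L,i_*\mathscr L)$.
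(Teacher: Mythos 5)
Your overall architecture — reduce to $\mr{d}_2(\omega)=0$ and invoke \cref{mainthm} — matches the paper, as does your identification of the obstruction to splitting $\tau^{\ge -1}i^{*}i_{*}\mathscr{L}$ with the extension class $\alpha_{\mathscr{L}}$, which vanishes iff $\mathscr{L}$ lifts to $\mr{L}^{(1)}$ (\cref{thm:th}). The gap is in your key step. First, ``$\mr{d}_2$ lowers exterior degree by one, hence is contraction against a single class $\gamma\in\mr{H}^{2}(\mr{L},\mathscr{N}^{\vee}_{\mr{L/X}})$'' is a non sequitur: in the paper's notation $\mr{d}_{2}^{p,q}=\mr{R}^{p}\Gamma\tilde\delta_{q}(\mathscr{L})$, and the row $q=1$ containing $\omega$ is governed by the \emph{deep} $k$-invariant $\delta_{n-1}(i^{*}i_{*}\mathscr{L})$, not by the first one $\delta_{0}(i^{*}i_{*}\mathscr{L})=\alpha_{\mathscr{L}}$; vanishing of the first $k$-invariant does not control the others (already $\delta_{1}(i^{*}i_{*}\mathscr{O}_{\mr{L}})=\alpha_{\mathscr{N}^{\vee}_{\mr{L/X}}}$, the HKR class, even though $\alpha_{\mathscr{O}_{\mr{L}}}=0$ always). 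You do acknowledge this extra $\mathscr{L}$-independent term, but your argument for its disappearance — that the symplectic form ``makes the first-order neighbourhood of $\mr{L}$ linear, so that $\mathscr{N}^{\vee}_{\mr{L/X}}\cong\mathscr{T}_{\mr{L}}$ extends to $\mr{L}^{(1)}$ and the intrinsic class vanishes'' — is unproven and not a consequence of the Lagrangian condition: a Lagrangian need not be split to first order, and the paper never proves nor uses any extension of $\mathscr{T}_{\mr{L}}$ to $\mr{L}^{(1)}$ (degeneration only forces the induced maps on cohomology to vanish, not the sheaf-level classes). Your fallback ``equivalently, one shows directly that $\mr{d}_2=0$ for $\mathscr{O}_{\mr{L}}$'' is circular, since that vanishing is an instance of the very theorem being proved ($\mathscr{O}_{\mr{L}}$ satisfies the hypothesis).

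The paper sidesteps exactly this difficulty with a Serre-duality symmetry, which is the one idea your proposal is missing: because $\mr{d}_2$ is multiplicative and annihilates $\mr{H}^{n}(\mr{L},\mr{K}_{\mr{L}})$, the differentials commute up to sign with the Serre-duality isomorphisms, so $\mr{d}_{2}^{p,1}=0$ iff $\mr{d}_{2}^{n-p-2,n}=0$. Thus only the \emph{top} row needs to be identified, and there $\mr{d}_{2}^{p,n}=\mr{R}^{p}\Gamma(\alpha_{\mathscr{L}}\otimes\mr{id}_{\mathscr{L}^{\vee}}\otimes\mr{id}_{\mr{K}_{\mr{L}}})$, which vanishes by \cref{thm:th} precisely because $\mathscr{L}$ extends to $\mr{L}^{(1)}$. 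This gives $\mr{d}_2(\omega)=0$ without any formality statement or control of the intermediate $\delta$'s; the full vanishing of all $\mr{d}_r$ is then supplied wholesale by the Lefschetz argument of \cref{mainthm}, not by your direct computation of $\mr{d}_2$. (Your closing dimension count via Hodge-to-de Rham degeneration is fine.) To repair the proof, replace the ``intrinsic term drops out'' step by this Serre-duality transfer to the $q=n$ row.
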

\2
\begin{theorem}\label{thm13}
 Let $i : \rm L \xhookrightarrow{} X$ be as above, and let $\mathscr{K}$ be any (existing) rational power of the canonical bundle of $\rm L$. Then the local-to-global $\mr{Ext}$ spectral sequence  $$\mr{E}_{2}^{p,q}= \mr{H}^{p}(\mr{L},\Omega^{q}_\mr{L}) \Rightarrow \mathrm{Ext}^{p+q}(i_{*}\mathscr{K},i_{*}\mathscr{K})$$ degenerates on the second page.
\end{theorem}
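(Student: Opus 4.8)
The plan is to deduce \cref{thm13} from \cref{mainthm} and \cref{thm12} by tracking how the obstruction class varies with the twisting line bundle. By \cref{mainthm} the sequence attached to $i_{*}\mathscr{K}$ degenerates on $\mr{E}_{2}$ if and only if $\mr{d}_{2}^{\mathscr{K}}(\omega)=0$, where $\mr{d}_{2}^{\mathscr{K}}\colon \mr{E}_{2}^{1,1}=\mr{H}^{1}(\mr{L},\Omega^{1}_{\mr{L}}) \to \mr{E}_{2}^{3,0}=\mr{H}^{3}(\mr{L},\mathscr{O}_{\mr{L}})$ is the relevant differential, so everything reduces to the vanishing of one class in $\mr{H}^{3}(\mr{L},\mathscr{O}_{\mr{L}})$ and the task is to control its dependence on $\mathscr{K}$. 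First I would show, from the Atiyah-class description of the differential underlying \cref{mainthm}, that $\mr{d}_{2}^{\mathscr{L}}$ is contraction against a class $\theta_{\mathscr{L}}\in\mr{H}^{2}(\mr{L},\mr{T}_{\mr{L}})$ depending on $\mathscr{L}$ only through $c_{1}(\mathscr{L})\in \mr{H}^{1}(\mr{L},\Omega^{1}_{\mr{L}})$, and affine-linearly so: $\theta_{\mathscr{L}}=\theta_{\mathscr{O}}+\iota_{\nu}\,c_{1}(\mathscr{L})$, with $\nu\in\mr{H}^{1}(\mr{L},\mathrm{Sym}^{2}\mr{T}_{\mr{L}})$ the extension class of $0\to \mr{T}_{\mr{L}}\to \mr{T}_{\mr{X}}|_{\mr{L}}\to \Omega_{\mr{L}}\to 0$ (symmetric, by the Lagrangian condition). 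Fixing $\omega$, this gives
\[ \mr{d}_{2}^{\mathscr{L}}(\omega)=\mr{d}_{2}^{\mathscr{O}}(\omega)+\Phi\big(c_{1}(\mathscr{L})\big), \qquad \Phi\colon \mr{H}^{1}(\mr{L},\Omega^{1}_{\mr{L}})\to\mr{H}^{3}(\mr{L},\mathscr{O}_{\mr{L}})\ \text{linear.} \]
Since $\mathscr{O}_{\mr{L}}$ extends to the first infinitesimal neighbourhood, \cref{thm12} forces $\mr{d}_{2}^{\mathscr{O}}(\omega)=0$, whence $\mr{d}_{2}^{\mathscr{L}}(\omega)=\Phi(c_{1}(\mathscr{L}))$ is genuinely linear in $c_{1}(\mathscr{L})$.

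Next I would produce one value of $\Phi$ from duality. As $\mr{X}$ is hyperkähler, $\mr{K}_{\mr{X}}\cong\mathscr{O}_{\mr{X}}$, hence $\omega_{\mr{L}/\mr{X}}\cong \mr{K}_{\mr{L}}$ and, by Grothendieck--Verdier duality, $\mathbb{D}(i_{*}\mathscr{O}_{\mr{L}})\cong i_{*}\mr{K}_{\mr{L}}[n]$ with $n=\dim\mr{L}$. The contravariant involution $\mathbb{D}$ then carries the local-to-global $\mr{Ext}$ spectral sequence of $\mathscr{O}_{\mr{L}}$ isomorphically onto that of $\mr{K}_{\mr{L}}$, inducing an isomorphism $\psi$ of $\mr{E}_{2}=\bigoplus_{p,q}\mr{H}^{p}(\mr{L},\Omega^{q}_{\mr{L}})$ which I would check to be bigraded, to scale the Kähler class by a nonzero factor $\psi(\omega)=\la\,\omega$, and to intertwine the two differentials up to sign, $\psi\circ\mr{d}_{2}^{\mathscr{O}}=\pm\,\mr{d}_{2}^{\mr{K}_{\mr{L}}}\circ\psi$. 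Evaluating at $\omega$ and using $\mr{d}_{2}^{\mathscr{O}}(\omega)=0$ gives $\mr{d}_{2}^{\mr{K}_{\mr{L}}}(\la\omega)=0$, that is $\Phi(c_{1}(\mr{K}_{\mr{L}}))=\mr{d}_{2}^{\mr{K}_{\mr{L}}}(\omega)=0$.

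The conclusion is then immediate. For an existing rational power $\mathscr{K}=\mr{K}_{\mr{L}}^{\otimes r}$ one has $c_{1}(\mathscr{K})=r\,c_{1}(\mr{K}_{\mr{L}})$ in the $k$-vector space $\mr{H}^{1}(\mr{L},\Omega^{1}_{\mr{L}})$ (pass to an integral multiple and use torsion-freeness), so by linearity $\mr{d}_{2}^{\mathscr{K}}(\omega)=\Phi\big(r\,c_{1}(\mr{K}_{\mr{L}})\big)=r\,\Phi\big(c_{1}(\mr{K}_{\mr{L}})\big)=0$, and \cref{mainthm} yields degeneration.

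The main obstacle I anticipate is the first step: proving that $\mr{d}_{2}^{\mathscr{L}}(\omega)$ is \emph{affine-linear} in $c_{1}(\mathscr{L})$ — i.e.\ that no higher powers of the Atiyah class of $\mathscr{L}$ enter the \emph{second} differential — and identifying the linear part with contraction against the second fundamental form. The duality input of the second step is comparatively formal, the only care being to verify that $\psi$ respects the bigrading and rescales $\omega$; the residual sign ambiguity is harmless, since either sign forces $\mr{d}_{2}^{\mr{K}_{\mr{L}}}(\omega)=0$.
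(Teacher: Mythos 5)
Your strategy is correct in outline and its two halves fare differently against the paper. The scaling step is exactly the paper's: the proof of \cref{thm13m} opens with $\alpha_{\mathscr{K}}=\mr{KS}\cup \mr{c}_{1}(\mathscr{K})=(s/t)\,\alpha_{\mr{K_{L}}}$, hence $\tilde\delta_{n}(\mathscr{K})=(s/t)\,\tilde\delta_{n}(\mr{K_{L}})$, reducing to the canonical bundle. Where you genuinely diverge is the vanishing for $\mr{K_{L}}$ itself. Your Grothendieck--Verdier transfer does work: since $\mr{K_{X}}\cong\mathscr{O}_\mr{X}$ one has $\mathbb{D}(i_{*}\mathscr{O}_\mr{L})\cong i_{*}\mr{K_{L}}[n]$, the shift cancels in $\mr{R}\shom(\mathbb{D}(i_*\mathscr{O}_\mr{L}),\mathbb{D}(i_*\mathscr{O}_\mr{L}))\cong \mr{R}\shom(i_{*}\mr{K}_\mr{L},i_{*}\mr{K}_\mr{L})$, the canonical filtration is functorial, and by \cref{rmkdecalage} an isomorphism in $\mr{D^{b}(X)}$ identifies the two local-to-global spectral sequences from $\mr{E}_{2}$ onward; your remaining checks (bigradedness, $\psi(\omega)=\la\omega$) reduce to the fact that the transpose acts on $i_{*}\wedge^{q}\mathscr{N}_\mr{L/X}$ by a sign, verifiable on local Koszul resolutions. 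The paper instead gets this vanishing intrinsically: $\mr{d}_{2}^{p,n}=\mr{R}^{p}\Gamma(\alpha_{\mr{K_{L}}})$ and the pushforward $i_{*}\alpha_{\mathscr{F}}$ of any obstruction class vanishes (\cref{lemmavanishing}, \cref{obviouscase}), so the entire top row of differentials dies without invoking duality of sheaves or \cref{cor}. Your route buys independence from that lemma at the cost of the transpose bookkeeping; the paper's is shorter and does not route through the structure-sheaf case.

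The weak point is the one you flag yourself, and it is a real gap as stated: there is no direct ``Atiyah-class description'' of the differential on $\mr{E}_{2}^{p,1}$ from which to extract $\theta_{\mathscr{L}}$. In the paper's computation $\mr{d}_{2}^{p,q}=\mr{R}^{p}\Gamma\tilde\delta_{q}(\mathscr{L})$, and for $q=1$ the relevant class is $\delta_{n-1}(i^{*}i_{*}\mathscr{L})$, the \emph{bottom} truncation class of $i^{*}i_{*}\mathscr{L}$ --- not $\alpha_{\mathscr{L}}=\delta_{0}$. Only $\delta_{0}$ and $\delta_{1}$ admit known geometric descriptions, and the paper explicitly remarks that interpretations of the higher $\delta$'s are missing; so a frontal proof that $\mr{d}_{2}^{\mathscr{L}}$ on $\mr{H}^{1}(\mr{L},\Omega^{1}_\mr{L})$ is contraction against $\theta_{\mathscr{O}}+\iota_{\nu}\,\mr{c}_{1}(\mathscr{L})$ is not available. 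The correct fix is the Serre-duality compatibility of the differentials, which the paper establishes in \cref{sec2} from multiplicativity of the spectral sequence together with the vanishing of $\mr{d}_{2}$ on $\mr{H}^{n}(\mr{L},\mr{K_{L}})$: it gives $\mr{d}_{2}^{p,1}=\pm\big(\mr{d}_{2}^{n-p-2,n}\big)^{\vee}$, and since $\mr{d}_{2}^{\cdot,n}$ is contraction with $\alpha_{\mathscr{L}}=\mr{KS}\cup \mr{c}_{1}(\mathscr{L})$, this yields exactly the linear dependence on $\mr{c}_{1}(\mathscr{L})$ you postulate, with $\theta_{\mathscr{O}}=0$ automatic from $\mr{At}(\mathscr{O}_\mr{L})=0$. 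With that substitution your argument closes; note, though, that once Serre duality is in hand the affine-linearity machinery becomes superfluous --- vanishing of the whole row $\mr{d}_{2}^{p,n}$, which is contraction with $(s/t)\,\alpha_{\mr{K_{L}}}$, kills $\mr{d}_{2}^{p,1}$ for every $p$ at once, and \cref{thm} concludes. That is the paper's shorter path.
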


\paragraph{Method of proof.}The condition $\rm d_{2}(\omega)=0$ is equivalent to commutativity of the differentials with the Lefschetz operator. The proof of \autoref{mainthm}, given in \cref{sec2}, is an extension of some ideas of Deligne in \cite{PMIHES_1968__35__107_0}, \cite{MR0498551}, \cite{MR1265526}, which use Lefschetz-like operators to prove degeneration of spectral sequences. These are based on the following simple observation that if $\mr{d : H^{*}(X}/k) \to \mr{H^{*}(X}/k)$ is a linear map of degree $-1$, which commutes with the Lefschetz operator, then $\rm d=0$. As noted above, in our setting a local calculation shows that $\mathrm{E}_{2}^{p,q}=\mr{H}^{p}(\mr{L},\Omega^{q}_\mr{L})$, and thinking of $\mr{E}_{2}^{p,q} \subset \mr{H}^{p+q}(\mr{L}/k)$, the differentials $\mr{d}_{r}$ of our spectral sequence are of (total) degree $+1$ and, even if they commute with Lefschetz, Deligne's observation doesn't apply immediately. In the case of comutativity with the Lefschetz operator, we are, however, able to apply Deligne's idea to a part of the differential, and the outcome is that $\mr{d}_{r}$ preserves primitive cohomology and vanishes on middle primitive cohomology. Then a downward induction argument allows us to conclude that $\mr{d}_{r}=0$.\\
$\hspace*{3mm}$To prove that $\mr{d}_{2}(\omega)=0$ under the hypotheses of \cref{thm12} and \cref{thm13}, we need a second technical ingredient - it involves explicitly identifying some of the differentials on the second page. These remarks apply in the general case of a locally complete intersection $i:\mr{Z \xhookrightarrow{} X}$ and occupy most of \cref{sec1}. Here we describe the simplest and also most important case. There is an obstruction class $\alpha_{\mathscr{L}} \in \mr{Ext}^{2}(\mathscr{L},\mathscr{L}\otimes\mathscr{N}^{\vee}_{\mr{L/X}})$ to extending $\mathscr{L}$ from $\mr{Z}$ to $\mr{2Z}$.\footnote{Defined by $\mathscr{I}_\mr{Z}^{2}$} It is a morphism $$\mathscr{L} \to \mathscr{L}\otimes\mathscr{N}^{\vee}_{\mr{L/X}}[2].$$ Taking the adjoint, we get a morphism $$\mathscr{N}_\mr{Z/X} \to \mathscr{O}_\mr{Z}[2],$$ which induces our differential $$\mr{d}_{2}^{p,1} :\mr{H}^{p}(\mr{Z},\mathscr{N}_\mr{Z/X}) \to \mr{H}^{p+2}(\mr{Z},\mathscr{O}_\mr{Z}).$$ In particular, we see this vanishes if $\mathscr{L}$ lifts to the first infinitesimal neighbourhood. 

\paragraph{Generality.}We note that over $\C$ the results obtained here hold in much broader generality. It is enough to require $\mr{X}$ holomorphic symplectic (not necessarily proper!) and $\mr{L}$ compact Kähler.

\paragraph{\textbf{Context.}}
The results obtained here are motivated by and seem to mirror the degeneration of $\mr{H}^{*}(\mr{L}/\C)\otimes \Lambda \Rightarrow \mr{HF}^{*}(\mr{L,L})$ of Solomon and Verbitsky in \cite{verb}.\\
$\hspace*{3mm}$In \cite{bbdjs} Joyce et al. construct a perverse sheaf $\mathscr{P}$ on the intersection of two oriented Lagrangians in a holomorphic symplectic variety following Joyce's notion of an algebraic $\mr{d}$-critical locus. This gives a first categorification of the intersection numbers of Lagrangians. It follows by \cref{thm13m}, and a calculation in \cite{igb}, that we have:
\begin{proposition}$\mr{dim}\big(\mathrm{Ext}^{i}\big(i_{*}\mr{K_{L}^{1/2}},i_{*}\mr{K_{L}^{1/2}}\big)\big) =\mr{dim }\big(\mr{R}^{i-n}\Gamma(\mathscr{P}_\mathrm{L,L})\big)$.\end{proposition}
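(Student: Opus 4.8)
The plan is to reduce both sides of the claimed equality to the Betti numbers of $\mr{L}$ and then match them. On the left, \cref{thm13} applies verbatim to $\mathscr{K}=\mr{K}_\mr{L}^{1/2}$, since a theta characteristic is a rational (namely one-half) power of the canonical bundle; hence the local-to-global spectral sequence degenerates on $\mr{E}_{2}$, and summing the surviving entries along the anti-diagonal gives
$$\dim \mr{Ext}^{i}\big(i_{*}\mr{K}_\mr{L}^{1/2}, i_{*}\mr{K}_\mr{L}^{1/2}\big)=\sum_{p+q=i}\dim \mr{H}^{p}(\mr{L},\Omega^{q}_\mr{L})=\sum_{p+q=i}h^{p,q}(\mr{L}).$$
Because $\mr{L}$ is smooth projective, hence a compact Kähler manifold, the Hodge decomposition identifies the right-hand side with $\dim \mr{H}^{i}(\mr{L},\C)$.

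Next I would pin down the shape of $\mathscr{P}_{\mr{L,L}}$. The self-intersection of $\mr{L}$ is the clean (in fact smooth, $n$-dimensional) locus $\mr{L}$ itself, and the induced $\mr{d}$-critical structure is the trivial one; so the perverse sheaf of \cite{bbdjs} is the shifted constant sheaf $\un{\C}_\mr{L}[n]$, twisted by the orientation $\Z/2$-local system determined by a choice of square root of the virtual canonical bundle along the intersection. The content of the calculation in \cite{igb} is that this square root is exactly the theta characteristic $\mr{K}_\mr{L}^{1/2}$, so with that orientation the local system is trivial and $\mathscr{P}_{\mr{L,L}}\simeq \un{\C}_\mr{L}[n]$. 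Taking hypercohomology and inserting the degree shift then yields
$$\mr{R}^{i-n}\Gamma(\mathscr{P}_{\mr{L,L}})=\mr{H}^{(i-n)+n}(\mr{L},\C)=\mr{H}^{i}(\mr{L},\C).$$

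Combining the two computations gives
$$\dim \mr{Ext}^{i}\big(i_{*}\mr{K}_\mr{L}^{1/2}, i_{*}\mr{K}_\mr{L}^{1/2}\big)=\dim \mr{H}^{i}(\mr{L},\C)=\dim \mr{R}^{i-n}\Gamma(\mathscr{P}_{\mr{L,L}}),$$
which is the asserted equality.

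The main obstacle I anticipate is bookkeeping rather than conceptual: one must verify that the orientation data implicit in the construction of \cite{bbdjs} (a square root of the virtual canonical bundle along the self-intersection) genuinely matches the theta characteristic $\mr{K}_\mr{L}^{1/2}$ used to form $i_{*}\mr{K}_\mr{L}^{1/2}$, and that the perverse/cohomological degree conventions — the $[n]$-shift, and hence the $i-n$ in the statement — agree between \cite{bbdjs}, \cite{igb}, and the grading of the spectral sequence. Once these conventions are aligned, the identification $\mathscr{P}_{\mr{L,L}}\simeq \un{\C}_\mr{L}[n]$ is the only nontrivial geometric input, and it is precisely what the calculation in \cite{igb} supplies.
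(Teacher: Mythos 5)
Your proposal is correct and is essentially the paper's own argument: the paper deduces the proposition exactly by combining \cref{thm13m} (degeneration of the local-to-global spectral sequence for $\mathscr{K}=\mr{K}_\mr{L}^{1/2}$, whence $\dim\mathrm{Ext}^{i}\big(i_{*}\mr{K}_\mr{L}^{1/2},i_{*}\mr{K}_\mr{L}^{1/2}\big)=\sum_{p+q=i}h^{p,q}(\mr{L})=\dim\mr{H}^{i}(\mr{L},\C)$ by Hodge theory) with the calculation in \cite{igb} identifying $\mathscr{P}_{\mr{L,L}}$ with $\un{\C}_\mr{L}[n]$ for the orientation determined by the theta characteristic. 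You also correctly isolate the only genuinely geometric input --- matching the orientation data of \cite{bbdjs} with $\mr{K}_\mr{L}^{1/2}$ --- as precisely what \cite{igb} supplies.
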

\2
$\hspace*{3mm}$The connection of the degeneration results obtained here with perverse sheaves and deformation quantisation is the subject of \cite{bm2}. We give new proofs, over $\C$ and under weaker hypotheses, of some of the degeneration theorems discussed here, using deformation quantisation modules. Furthermore, we prove the formality of the differential graded algebra $\mr{RHom}\big(i_*\mr{K}_\mr{L}^{1/2},i_*\mr{K}_\mr{L}^{1/2}\big)$ as well as various generalisations to pairs of Lagrangians.\\
$\hspace*{3mm}$In \cite{Kapustin_three-dimensionaltopological} Kapustin and Rozansky study the RW model which turns out to be closely related to the deformation quantisation of the $2$-periodic derived category of the target space. They conjecture the existence of a $2$-category associated to a holomorphic symplectic variety whose simplest objects are Lagrangians. In the special case of (a deformation of) the cotangent bundle, the endomorphism category of the zero section in the conjectured $2$-category is a monoidal deformation of the $2$-periodic derived category of the underlying complex manifold. Then the $\mr{Ext}$ groups could be realised as Hochschild homology of the $2$-periodic derived category which might hint at the collapse of the $\mr{Ext}$ spectral sequence.

\paragraph{Plan of paper.}
\cref{sec1} contains general results on multiplication in spectral sequences and the homological algebra of locally complete intersections. In more detail, we recall multiplicative structures on spectral sequence and review some classical homological algebra, mostly due to Deligne. After that we have a reminder on deformation-obstruction classes following Huybrechts-Thomas and Arinkin-Căldăraru. Our contributions in this sections are explicit calculations of the differentials of the local-to-global $\mr{Ext}$ spectral sequence in terms of these classes. In \cref{sec2} we prove our main results on degeneration of spectral sequences in the absolute case, i.e. over a point, while \cref{sec3} is devoted to various generalisations of the results in \cref{sec2}: we describe relative versions of \cref{mainthm}, \cref{thm12} and \cref{thm13}, and speculate on a possible generalisation to coisotropic subvarieties.

\paragraph{Acknowledgements.}I would like to thank my supervisor Richard Thomas and Daniel Huybrechts for suggesting the problem, many helpful discussions, suggestions and corrections. Thanks to Julien Grivaux, Travis Schedler and Jake Solomon for their useful comments. This work has been supported by EPSRC [EP/R513052/1], President's PhD Scholarship, Imperial College London. 

\section{General results}\label{sec1}

\paragraph{Notation.} 
We shall be working throughout over an algebraically closed field $k$ of characteristic $0$. By $\rm X$ we denote, in general, a scheme, $\rm Coh(X)$ is the abelian category of coherent sheaves on $\rm X$ and its bounded derived category is $\mr{D^{b}(X)}$. Sometimes we shall need the derived categories $\mr{{\mr{D}}^{\pm}(X)}$ of bounded above or below complexes. These are triangulated categories, so come with a shift functor $\mathscr{F} \mapsto \mathscr{F}[1]$ and exact triangles $\mathscr{F}_{1} \to \mathscr{F}_{2} \to \mathscr{F}_{3} \to \mathscr{F}_{1}[1]$. All functors we consider will be implicitly derived, except for $\shom$, $\mr{Hom}$ and $\Gamma$. For a complex $\mathscr{F}$, we let $\mathscr{H}^{i}(\mathscr{F})$ be its $i$th cohomology sheaf, so $\mathscr{H}^{i}(\mr{R}\shom(\mathscr{F},\mathscr{G}))=\ext^{i}(\mathscr{F},\mathscr{G})$, $\mr{H}^{i}(\mr{RHom}(\mathscr{F},\mathscr{G}))=\mr{Ext}^{i}(\mathscr{F},\mathscr{G})$, in particular, we get the useful identity $\mr{Ext}^{i}(\mathscr{F},\mathscr{G})=\mr{Hom}(\mathscr{F},\mathscr{G}[i])$; similarly $\mathscr{H}^{-i}((\mathscr{F}\otimes \mathscr{G}))=\mathscr{T}or_{i}(\mathscr{F},\mathscr{G})$ and so forth. We say that $\mathscr{F}$ is a perfect complex, if it is locally isomorphic, in the derived category, to a finite complex of locally free sheaves of finite rank. The category of perfect complexes is triangulated and denoted by $\mr{Perf(X)}$.\\
$\hspace*{5mm}$ Similarly, we let $\mr{FCoh(X)}$ be the abelian category of finitely filtered objects of $\mr{Coh(X)}$. We denote $\mr{D^{\pm}F(X)}$ the filtered derived categories of Deligne which are localisations with respect to the class of filtered quasi-isomorphisms. Naturally we have filtered derived functors and for a left exact $\mr{T: FCoh(X) \to Vect}$ and any $\mr{(K,F) \in D^{\pm}F(X)}$, we get a spectral sequence \begin{equation}\label{eq:1}\mr{E}_{1}^{p,q} = \mr{R}^{p+q}\mr{T(Gr}_\mr{F}^{p}(\mr{K})) \Rightarrow \mr{R}^{p+q}\mr{T(K)}.\end{equation}This construction is a functor from the filtered derived category to the category of cohomological spectral sequences of vector spaces.

\paragraph{Homological algebra.}
We begin by collecting some general results on multiplication in spectral sequences and sheaves on subvarieties.\2
\begin{definition}
Let $\mr{E}_{2}^{'p,q} \Rightarrow \mr{H}^{'p+q}$, $\mr{E}_{2}^{''p,q} \Rightarrow \mr{H}^{''p+q}$, $\mr{E}_{2}^{p,q} \Rightarrow \mr{H}^{p+q}$ be spectral sequences. A pairing of $\mr{E}_{2}^{'p,q} \Rightarrow \mr{H}^{'p+q}$ and $\mr{E}_{2}^{''p,q} \Rightarrow \mr{H}^{''p+q}$ to $\mr{E}_{2}^{p,q} \Rightarrow \mr{H}^{p+q}$ is a family of maps $$\mr{\cup_{r} : E}_{r}^{'p,q}\otimes \mr{E}_{r}^{''s,t} \to \mr{E}_{r}^{p+s,q+t} \text{ and } \mr{\cup} : \mr{H}^{'p}\otimes \mr{H}^{''q} \to \mr{H}^{p+q}$$ such that $\mr{\cup}_{r}$ are compatible with the differentials, $\cup_{r+1}$ is induced by $\mr{\cup}_{r}$, $\mr{colim} \cup_{r} = \cup_{\infty}$ and $\mr{\cup_{\infty}=gr(\cup)}$.
\end{definition}
\2
\begin{definition}
 Given a Grothendieck spectral sequence $\mr{E}_{2}^{p,q}(\mr{S})=\mr{R}^{p}\mr{TR}^{q}\mr{G(S)} \Rightarrow \mr{H}^{p+q}\mr{(S)}$ such that the functors $\mr{RT}$, $\mr{RG}$ and $\rm H$ have cup products, we shall say it has cup products if, given a pairing $\mr{S_{1}\otimes S_{2} \to S}$, we have a natural pairing of the corresponding spectral sequences and, in addition, the products on $\mr{E_{2}}$ and $\mr{H}$ are the ones induced by the functors.
\end{definition}
\2
\begin{proposition}
 Let $\mathscr{F}$ a be coherent sheaf on $\mr{X}$. Then the local-to-global $\mr{Ext}$ spectral sequence $$\mr{E}_{2}^{p,q}=\mr{H}^{p}(\mr{X},\ext^{q}(\mathscr{F},\mathscr{F})) \Rightarrow \mr{Ext}^{p+q}(\mathscr{F},\mathscr{F})$$ has cup products.
\end{proposition}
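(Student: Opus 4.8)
The plan is to realise this spectral sequence as the one attached by \eqref{eq:1} to a \emph{filtered monoid} in the filtered derived category, and to produce the pairing by pushing its multiplication through the lax monoidal functor $\Gamma$. First I would resolve $\F$ by a ($K$-)injective complex $\mathscr{I}$ and form the sheaf of differential graded algebras $\mathscr{A} := \shom^{\bullet}(\mathscr{I},\mathscr{I})$, with multiplication given by composition. As $\mathscr{I}$ is injective, $\mathscr{A}$ computes $\mr{R}\shom(\F,\F)$, its cohomology sheaves are $\ext^{q}(\F,\F)$, and composition induces the Yoneda product on $\ext^{*}(\F,\F)$ and, after taking hypercohomology, on $\mr{Ext}^{*}(\F,\F)$. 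Equipping $\mathscr{A}$ with its good-truncation filtration $\tau$ and applying \eqref{eq:1} to the left exact functor $\Gamma$ recovers, after the standard reindexing $\mr{Gr}^{q}_{\tau}\mathscr{A} \simeq \ext^{q}(\F,\F)[-q]$, exactly the local-to-global $\mr{Ext}$ spectral sequence.

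The multiplicativity is essentially built in: if $x,y$ are local sections of $\mathscr{A}$ of degrees $\le a$, $\le b$ that are cocycles in the top degrees, then $xy$ has degree $\le a+b$ and $\mr{d}(xy) = \mr{d}x \cdot y \pm x \cdot \mr{d}y$ shows $xy$ is a cocycle in degree $a+b$; hence composition restricts to a map $\tau^{\le a}\mathscr{A} \otimes \tau^{\le b}\mathscr{A} \to \tau^{\le a+b}\mathscr{A}$, making $(\mathscr{A},\tau)$ a filtered algebra for the convolution filtration on the tensor product. I would then feed the multiplication $\mu : (\mathscr{A},\tau)\otimes(\mathscr{A},\tau) \to (\mathscr{A},\tau)$ into \eqref{eq:1}, which the excerpt records is functorial in the filtered object, and combine it with the (filtered) lax monoidal structure of $\mr{R}\Gamma$, i.e.\ the cup product $\mr{R}\Gamma(A)\otimes\mr{R}\Gamma(B) \to \mr{R}\Gamma(A\otimes B)$. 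Functoriality then yields the family $\cup_{r} : \mr{E}_{r}^{p,q}\otimes \mr{E}_{r}^{s,t} \to \mr{E}_{r}^{p+s,q+t}$ together with $\cup$ on $\mr{H}^{*}$, automatically compatible with the $\mr{d}_{r}$, with $\cup_{r+1}$ induced by $\cup_{r}$ and $\cup_{\infty} = \mr{gr}(\cup)$. Finally I would identify the induced products: on $\mr{H}^{*} = \mr{Ext}^{*}(\F,\F)$ the pairing is the Yoneda product, since $\mu$ is composition, and on $\mr{E}_{2}^{p,q} = \mr{H}^{p}(\mr{X},\ext^{q}(\F,\F))$ it is the cup product in sheaf cohomology paired with the composition product $\ext^{q}(\F,\F)\otimes \ext^{q'}(\F,\F) \to \ext^{q+q'}(\F,\F)$ --- precisely the products induced by $\mr{R}\Gamma$ and by sheaf $\mr{Ext}$. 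This is exactly the statement that the Grothendieck spectral sequence has cup products.

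The main obstacle I anticipate is the strictness of these compatibilities at the cochain level rather than merely up to (filtered) quasi-isomorphism: one must check that $\mu$ is an honest morphism in the filtered derived category $\mr{D}^{+}\mr{F}(\mr{X})$, that the lax monoidal structure of $\mr{R}\Gamma$ is itself filtered for the convolution filtration, and that the Koszul signs introduced by the shifts $\ext^{q}(\F,\F)[-q]$ are consistent across all pages so the pairing is graded-commutative and Leibniz-compatible with the differentials. Reconciling the indexing of \eqref{eq:1} with the Cartan--Eilenberg (equivalently $K$-injective) model used to define the local-to-global spectral sequence, so that the two genuinely agree as \emph{multiplicative} spectral sequences, is the step requiring the most care.
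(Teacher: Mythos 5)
The paper records this proposition without proof, treating it as classical, so there is no in-text argument to measure you against; judged on its own, your proof is correct, and it is the standard one. It is moreover exactly in the spirit of the paper's own point of view: \cref{rmkdecalage} constructs the local-to-global $\mr{Ext}$ sequence from a canonically filtered complex via \eqref{eq:1} and Deligne's décalage, and your $\big(\shom(\mathscr{I},\mathscr{I}),\tau\big)$, with composition as multiplication, is precisely a multiplicative model of such a filtered object. Your observation that a product of top-degree cocycles is a cocycle, so that composition restricts to $\tau^{\le a}\otimes\tau^{\le b}\to\tau^{\le a+b}$, is the heart of the matter, and the classical machinery (a pairing of filtered complexes, with the convolution filtration, induces a pairing of the spectral sequences of \eqref{eq:1}) does the rest once you fix a genuinely lax monoidal model of $\mr{R}\Gamma$ --- Godement resolutions, or Čech on an affine cover, dispose of the strictness worry you raise about $K$-injective models. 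The one step you should make explicit rather than wave at as ``standard reindexing'' is that the spectral sequence \eqref{eq:1} of $(\mathscr{A},\tau)$ agrees with the Grothendieck local-to-global sequence only after the décalage renumbering $\mr{E}_{r}^{p,q}\mapsto \mr{E}_{r+1}^{2p+q,-p}$, so you need the (standard, but not free) fact that the décalé of a multiplicative filtration is again multiplicative; with that, the renumbering transports your $\cup_{r}$ compatibly, and the induced products on $\mr{E}_{2}$ (sheaf-cohomology cup combined with composition on $\ext^{*}(\mathscr{F},\mathscr{F})$) and on the abutment (Yoneda) are the ones demanded by the paper's definition. A last small point: $\shom(\mathscr{I},\mathscr{I})$ need not be bounded below even for a bounded-below injective resolution $\mathscr{I}$, but since its cohomology sheaves vanish in negative degrees this is harmless in $\mr{D^{+}F(X)}$ up to filtered quasi-isomorphism, e.g.\ by passing to the sub-DGA $\tau^{\le N}$ for $N\gg 0$ exhausting the filtration.
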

\2
\begin{remark}
 More generally, there are also pairings of the local-to-global $\mr{Ext}$ in the case of different sheaves arising from the compostion in local $\mr{R}\shom$.
\end{remark}

$\hspace*{5mm}$The local-to-global $\mr{Ext}$ spectral sequence arises, classically, from the natural isomorphism of derived functors $\mr{RHom} \cong \mr{R}\Gamma\circ \mr{R}\shom$. We shall give an alternative way of constructing it - via filtered complexes. This has the advantage of giving a geometric interpretation of the differentials as obstructions to formality of certain objects. For $(\mathscr{F},\mr{d}) \in \mr{D(X)}$, define the canonical filtrations: \[\tau^{\le p}\mathscr{F} = \begin{cases}  \mathscr{F}^{i}, & \text{for }  i < p \\  \mr{ker(d}^{p}), & \text{for } i=p \\ \rm 0, & \text{for }  i>p                                                                                                                                                                                    \end{cases},\\
 \tau^{\ge p}\mathscr{F} = \begin{cases}  0, & \text{for }  i < p \\  \mathscr{F}^{p}/\mr{im(d}^{p-1}), & \text{for }  i=p \\  \mathscr{F}^{i}, & \text{for }  i>p.                                                                                                                                                                                    \end{cases}
\]
The inclusion $\canl\mathscr{F} \xhookrightarrow{} \mathscr{F}$ induces isomorphisms on $\mathscr{H}^{i}$ for $i \le p$ and the canonical projection $\mathscr{F} \to \cang\mathscr{F}$ induces isomorphisms on $\mathscr{H}^{i}$ for $i \ge p$. There are canonical exact triangles $$\tau^{\le p-1}\mathscr{F} \to \canl\mathscr{F} \to \mathscr{H}^{p}(\mathscr{F})[-p] \to \tau^{\le p-1}\mathscr{F}[1],$$ $$\mathscr{H}^{p-1}(\mathscr{F})[-p+1] \to \tau^{\ge p-1}\mathscr{F} \to \cang\mathscr{F} \to \mathscr{H}^{p-1}(\mathscr{F})[-p+2].$$ Let's set $\tau^{[p-1,p]}\mathscr{F} = \tau^{\ge p-1}\tau^{\le p}\mathscr{F}$, so that there's an exact triangle: $$\mathscr{H}^{p-1}(\mathscr{F})[-p+1] \to \tau^{[p-1,p]}\mathscr{F} \to \mathscr{H}^{p}(\mathscr{F})[-p] \xrightarrow{\delta_{-p}(\mathscr{F})[-p]} \mathscr{H}^{p-1}(\mathscr{F})[-p+2],$$ and the elements $\delta_{-p}(\mathscr{F}) \in \mr{Ext}^{2}(\mathscr{H}^{p}(\mathscr{F}), \mathscr{H}^{p-1}(\mathscr{F}))$ realise universally the second differential $\mr{d_{2}}$ of the spectral sequence of any (filtered) derived functor applied to $\mathscr{F}$.\3
\begin{remark}\label{rmkdecalage}
 In particular, if $i : \mr{Z \xhookrightarrow{} X}$ is a locally complete intersection, $\mathscr{L} \in \mr{Pic(Z)}$, the spectral sequence \begin{equation*}\label{eq:2}\mathrm{E}_{2}^{p,q}=\mr{H}^{p}(\mr{X},\ext^{q}(i_{*}\mathscr{L},i_{*}\mathscr{L})) \Rightarrow \mr{Ext}^{p+q}(i_{*}\mathscr{L},i_{*}\mathscr{L})\end{equation*}                                                                                                                                                                                                                                                        can be realised from $i^{!}i_{*}\mathscr{L}$ with its canonical filtration. Indeed, Grothendieck-Verdier duality gives a filtered quasi-isomorphism between canonically filtered complexes \begin{equation*}\label{eq:5}\mr{R}\shom(i_{*}\mathscr{L},i_{*}\mathscr{L}) \simeq i_{*}\mr{R}\shom(\mathscr{L},i^{!}i_{*}\mathscr{L})\end{equation*} and applying filtered derived global sections functor, we get a spectral sequence using $(\ref{eq:1})$ which, by Deligne's décalage theorem (see \cite{MR0498551}), is the local-to-global $\mr{Ext}$ spectral sequence after renumbering $\mr{E}_{r}^{p,q} \mapsto \mr{E}_{r+1}^{2p+q,-p}$.
\end{remark}

$\hspace*{5mm}$ Let $i \rm : Z \xhookrightarrow{} X$ be a locally complete intersection. The normal bundle of $\mr{Z}$ in $\mr{X}$ is denoted by $\mathscr{N}_{\mr{Z/X}}$.
\2
\begin{proposition}Let $i \rm : Z \xhookrightarrow{} X$ be a locally complete intersection. 
 Suppose $c = \mr{codim(Z,X)}$, and let $\mathscr{F}$ be a coherent sheaf on $\rm Z$. Then $$\mathscr{H}^{-i}(i^{*}i_{*}\mathscr{F})\cong \begin{cases}
 \mathscr{F}\otimes \wedge^{i}\mathscr{N}^{\vee}_{\mr{Z/X}}, \, 0\le i \le c\\
 0, \text{ otherwise.}                                                                                                                                          \end{cases}
$$ 
\end{proposition}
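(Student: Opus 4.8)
The plan is to reduce to a local computation via the Koszul resolution, where the locally complete intersection hypothesis makes the derived pullback especially transparent. Since the cohomology sheaves $\mathscr{H}^{-i}(i^{*}i_{*}\mathscr{F})$ are computed locally, I may work on an affine open $U \subset \mr{X}$ on which the ideal $\mathscr{I}_{\mr{Z}}$ is generated by a regular sequence $f_{1},\ldots,f_{c}$, with $c = \mr{codim}(\mr{Z},\mr{X})$. On such a $U$ the Koszul complex $K_{\bullet} = \big(\wedge^{\bullet}E,\, \mr{d}\big)$, with $E = \mathscr{O}_{\mr{X}}^{\oplus c}$ placed so that $\wedge^{j}E$ sits in cohomological degree $-j$ and differential given by contraction against $(f_{1},\ldots,f_{c})$, is a locally free resolution of $i_{*}\mathscr{O}_{\mr{Z}}$ as an $\mathscr{O}_{\mr{X}}$-module.

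Since $i^{*}i_{*}\mathscr{F} = \mathscr{O}_{\mr{Z}} \otimes^{\mr{L}}_{\mathscr{O}_{\mr{X}}} i_{*}\mathscr{F}$, I would compute it as $K_{\bullet} \otimes_{\mathscr{O}_{\mr{X}}} i_{*}\mathscr{F}$. The crucial observation is that the induced differential acts on the $i_{*}\mathscr{F}$ factor by multiplication by the $f_{j}$; but $i_{*}\mathscr{F}$ is an $\mathscr{O}_{\mr{Z}}$-module, so each $f_{j} \in \mathscr{I}_{\mr{Z}}$ acts as zero. Hence the tensored complex has vanishing differential, and its cohomology in degree $-j$ is $\wedge^{j}E \otimes_{\mathscr{O}_{\mr{X}}} i_{*}\mathscr{F} \cong \wedge^{j}(E|_{\mr{Z}}) \otimes_{\mathscr{O}_{\mr{Z}}} \mathscr{F}$. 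This already yields the vanishing outside $0 \le j \le c$, since $\wedge^{j}E = 0$ for $j > c$ and for $j < 0$. It then remains to identify $E|_{\mr{Z}}$ with the conormal bundle: the surjection $E \twoheadrightarrow \mathscr{I}_{\mr{Z}}$, $e_{j} \mapsto f_{j}$, induces a map $E|_{\mr{Z}} = E \otimes_{\mathscr{O}_{\mr{X}}}\mathscr{O}_{\mr{Z}} \to \mathscr{I}_{\mr{Z}}/\mathscr{I}_{\mr{Z}}^{2} = \mathscr{N}^{\vee}_{\mr{Z/X}}$, and the locally complete intersection hypothesis guarantees this is an isomorphism of locally free $\mathscr{O}_{\mr{Z}}$-modules of rank $c$. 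Passing to exterior powers and tensoring with $\mathscr{F}$ gives the asserted local isomorphism $\mathscr{H}^{-j}(i^{*}i_{*}\mathscr{F}) \cong \mathscr{F} \otimes \wedge^{j}\mathscr{N}^{\vee}_{\mr{Z/X}}$.

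The main obstacle is to check that these local isomorphisms are canonical — independent of the chosen regular sequence — so that they patch to a global statement. The cleanest way to dispatch this is to note that the identifications are intrinsic rather than tied to the $f_{j}$. In degree $-1$ one has the canonical isomorphism $\mathscr{H}^{-1}(i^{*}i_{*}\mathscr{O}_{\mr{Z}}) = \mathscr{T}or_{1}^{\mathscr{O}_{\mr{X}}}(\mathscr{O}_{\mr{Z}},\mathscr{O}_{\mr{Z}}) \cong \mathscr{I}_{\mr{Z}}/\mathscr{I}_{\mr{Z}}^{2} = \mathscr{N}^{\vee}_{\mr{Z/X}}$, and the algebra structure on $\mathscr{H}^{-\bullet}(i^{*}i_{*}\mathscr{O}_{\mr{Z}})$ supplies canonical isomorphisms $\mathscr{H}^{-j} \cong \wedge^{j}\mathscr{N}^{\vee}_{\mr{Z/X}}$ (matching, locally, the $\wedge^{j}(E|_{\mr{Z}})$ above). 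The general case then follows because $\mathscr{H}^{-\bullet}(i^{*}i_{*}\mathscr{F})$ is a module over this exterior algebra which is locally free of rank one generated in degree $0$ by $\mathscr{F}$, forcing $\mathscr{H}^{-j}(i^{*}i_{*}\mathscr{F}) \cong \wedge^{j}\mathscr{N}^{\vee}_{\mr{Z/X}} \otimes \mathscr{F}$ globally and canonically.
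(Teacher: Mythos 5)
Your proof is correct. Note that the paper states this proposition without proof at all --- it is a classical computation, contained in essence in Arinkin--Căldăraru \cite{ARINKIN2012815} --- so there is no in-paper argument to compare against; yours is the standard one, and you execute it properly. In particular you correctly isolate the only point requiring genuine care, namely that the Koszul-local identifications depend on the choice of regular sequence and must be replaced by canonical maps before they glue: the canonical isomorphism $\mathscr{T}or_{1}^{\mathscr{O}_\mr{X}}(\mathscr{O}_\mr{Z},\mathscr{O}_\mr{Z})\cong \mathscr{I}_\mr{Z}/\mathscr{I}_\mr{Z}^{2}$, the multiplicative structure of the Tor algebra giving $\wedge^{j}\mathscr{N}^{\vee}_{\mr{Z/X}}\to \mathscr{H}^{-j}(i^{*}i_{*}\mathscr{O}_\mr{Z})$, and the module action on $\mathscr{H}^{0}(i^{*}i_{*}\mathscr{F})=\mathscr{F}$ giving $\wedge^{j}\mathscr{N}^{\vee}_{\mr{Z/X}}\otimes\mathscr{F}\to \mathscr{H}^{-j}(i^{*}i_{*}\mathscr{F})$, whose bijectivity is a local question settled by your Koszul computation.

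Two small points of phrasing. First, $\mathscr{H}^{-\bullet}(i^{*}i_{*}\mathscr{F})$ is not ``locally free of rank one'' over the exterior algebra unless $\mathscr{F}$ itself is locally free; what you actually need, and what your local computation delivers, is that the canonical action map $\wedge^{\bullet}\mathscr{N}^{\vee}_{\mr{Z/X}}\otimes\mathscr{F}\to \mathscr{H}^{-\bullet}(i^{*}i_{*}\mathscr{F})$ is an isomorphism --- say it that way. Second, passing from graded-commutativity of the Tor algebra to a map out of the exterior algebra is only automatic away from characteristic $2$; the paper works in characteristic $0$, so this is harmless, and in any case your Koszul model shows the multiplication is strictly alternating, so the map exists in all characteristics. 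Neither remark affects the validity of the argument.
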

\2
\begin{proposition}\label{prop2}
 Let $i \rm : Z \xhookrightarrow{} X$ be a locally complete intersection of codimension $c$. Let $\mathscr{F}$ and $\mathscr{G}$ be coherent sheaves on $Z$. 
 \begin{enumerate}
  \item Assume $\mathscr{F}$ locally free, then we have $\ext^{i}(i_{*}\mathscr{F},i_{*}\mathscr{G}) \cong \begin{cases} i_{*}(\wedge^{i}\mathscr{N}_{\mr{Z/X}} \otimes \mathscr{F}^{\vee}\otimes \mathscr{G}), \, 0\le i \le c\\
  0, \text{ otherwise. }
  \end{cases}$
  \item The Yoneda product coincides with the usual cup product. More precisely, let $\mathscr{F}$, $\mathscr{G}$ be locally free sheaves, $\mathscr{H}$ any coherent sheaf, then the Yoneda multiplication $$\ext^{i}(i_{*}\mathscr{G},i_{*}\mathscr{H}) \otimes \ext^{j}(i_{*}\mathscr{F},i_{*}\mathscr{G}) \to \ext^{i+j}(i_{*}\mathscr{F},i_{*}\mathscr{H})$$ corresponds under the above isomorphisms to $$i_{*}(\wedge^{i}\mathscr{N}_{\mr{Z/X}} \otimes \mathscr{G}^{\vee}\otimes \mathscr{H}) \otimes i_{*}(\wedge^{j}\mathscr{N}_{\mr{Z/X}} \otimes \mathscr{F}^{\vee}\otimes \mathscr{G}) \to i_{*}(\wedge^{i+j}\mathscr{N}_{\mr{Z/X}} \otimes \mathscr{F}^{\vee}\otimes \mathscr{H}),$$ given by exterior product and the natural map $\mathscr{G}\otimes \mathscr{G}^{\vee} \to \mathscr{O}_\mr{Z}$.
 \end{enumerate}
\end{proposition}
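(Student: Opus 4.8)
The proof splits along the two parts, and throughout I would work locally on $\mr{X}$: both assertions concern the local $\ext$-sheaves and the sheaf pairing between them, so they may be checked on an affine cover where $\mr{Z}$ is cut out by a regular sequence $x_{1},\dots,x_{c}$.

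\emph{Part (1).} The plan is to feed the preceding proposition into the adjunction $i^{*}\dashv i_{*}$. For a closed immersion one has $\mr{R}\shom_\mr{X}(i_{*}\mathscr{F},i_{*}\mathscr{G})\cong i_{*}\mr{R}\shom_\mr{Z}(i^{*}i_{*}\mathscr{F},\mathscr{G})$, and by the preceding proposition the object $i^{*}i_{*}\mathscr{F}$ has cohomology sheaves $\mathscr{H}^{-j}(i^{*}i_{*}\mathscr{F})\cong\wedge^{j}\mathscr{N}^{\vee}_{\mr{Z/X}}\otimes\mathscr{F}$ for $0\le j\le c$, all locally free. I would then observe that this complex is locally \emph{formal}: the Koszul resolution of $i_{*}\mathscr{O}_\mr{Z}$ has differential given by multiplication by the $x_{i}$, so its derived restriction to $\mr{Z}$ has zero differential and splits as $\bigoplus_{j}(\wedge^{j}\mathscr{N}^{\vee}_{\mr{Z/X}}\otimes\mathscr{F})[j]$. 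Since each summand is locally free, the hyper-$\ext$ spectral sequence computing $\mr{R}\shom_\mr{Z}(i^{*}i_{*}\mathscr{F},\mathscr{G})$ collapses (all $\ext^{p}_\mr{Z}$ with $p>0$ vanish), leaving cohomology $\shom_\mr{Z}(\wedge^{i}\mathscr{N}^{\vee}_{\mr{Z/X}}\otimes\mathscr{F},\mathscr{G})=\wedge^{i}\mathscr{N}_{\mr{Z/X}}\otimes\mathscr{F}^{\vee}\otimes\mathscr{G}$ in degree $i$, zero outside $[0,c]$. Local freeness of $\mathscr{F}$ is exactly what lets $\shom_\mr{Z}(-\otimes\mathscr{F},\mathscr{G})$ read off the factor $\mathscr{F}^{\vee}\otimes\mathscr{G}$ and kills the higher terms.

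\emph{Part (2).} First I would reduce to $\mathscr{F}=\mathscr{G}=\mathscr{H}=\mathscr{O}_\mr{Z}$. Extending $\mathscr{F},\mathscr{G}$ locally to vector bundles $\tilde{\mathscr{F}},\tilde{\mathscr{G}}$ on $\mr{X}$ and using the projection formula $i_{*}\mathscr{F}\cong i_{*}\mathscr{O}_\mr{Z}\otimes\tilde{\mathscr{F}}$, the identification of Part (1) becomes $\mr{R}\shom(i_{*}\mathscr{F},i_{*}\mathscr{G})\cong\tilde{\mathscr{F}}^{\vee}\otimes\tilde{\mathscr{G}}\otimes\mr{R}\shom(i_{*}\mathscr{O}_\mr{Z},i_{*}\mathscr{O}_\mr{Z})$, and likewise for the other two factors. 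Under these, Yoneda composition becomes the evaluation pairing $\tilde{\mathscr{G}}^{\vee}\otimes\tilde{\mathscr{G}}\to\mathscr{O}_\mr{X}$ on the middle bundles — which restricts to the contraction $\mathscr{G}^{\vee}\otimes\mathscr{G}\to\mathscr{O}_\mr{Z}$ in the statement — tensored with the composition product on $\ext^{\bullet}(i_{*}\mathscr{O}_\mr{Z},i_{*}\mathscr{O}_\mr{Z})$, while the $\tilde{\mathscr{F}}^{\vee}$ and $\tilde{\mathscr{H}}$ factors are carried along untouched. Everything therefore reduces to identifying the Yoneda algebra $\ext^{\bullet}(i_{*}\mathscr{O}_\mr{Z},i_{*}\mathscr{O}_\mr{Z})$ with the exterior algebra $\wedge^{\bullet}\mathscr{N}_{\mr{Z/X}}$.

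For that I would compute on the Koszul resolution $K_{\bullet}$ of $i_{*}\mathscr{O}_\mr{Z}$, whose differential is interior multiplication $\iota_{s}$ by the section $s=\sum x_{i}e_{i}^{\vee}$ of $V^{\vee}$, where $V=\mathscr{O}_\mr{X}^{c}$. A degree-$1$ class $\alpha\in\ext^{1}=\mathscr{N}_{\mr{Z/X}}=V^{\vee}|_\mr{Z}$, lifted to $\tilde\alpha\in V^{\vee}$, is realised on $K_{\bullet}$ by the contraction $\iota_{\tilde\alpha}$, which is a chain map into the shift precisely because interior products anticommute, $\iota_{\tilde\alpha}\iota_{s}=-\iota_{s}\iota_{\tilde\alpha}$. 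The Yoneda product of two classes is then represented by $\iota_{\tilde\alpha}\iota_{\tilde\beta}=\pm\iota_{\tilde\alpha\wedge\tilde\beta}$, i.e.\ contraction against $\tilde\alpha\wedge\tilde\beta\in\wedge^{2}V^{\vee}$, which restricts to $\alpha\wedge\beta\in\wedge^{2}\mathscr{N}_{\mr{Z/X}}$; the same computation in higher degrees shows a class $\omega\in\wedge^{p}\mathscr{N}_{\mr{Z/X}}$ acts by $\iota_{\tilde\omega}$ and that composition is $\iota_{\tilde\omega}\iota_{\tilde\omega'}=\pm\iota_{\tilde\omega\wedge\tilde\omega'}$. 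Hence Yoneda multiplication is the wedge product; as both products are maps of sheaves agreeing on each affine, they agree globally.

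The step I expect to be the main obstacle is the bookkeeping in this last computation: checking that $\iota_{\tilde\alpha}$ is genuinely the chain-level lift of the \emph{canonical} generator of $\ext^{1}$, so that the local identification $\ext^{1}\cong\mathscr{N}_{\mr{Z/X}}$ is the one produced in Part (1) and not merely an abstract isomorphism, and tracking the Koszul signs carefully enough that $\iota_{\tilde\omega}\iota_{\tilde\omega'}=\pm\iota_{\tilde\omega\wedge\tilde\omega'}$ yields the honest wedge product rather than a sign-twisted variant. Once the sign conventions are pinned down, the reduction in Part (2) makes the middle contraction $\mathscr{G}^{\vee}\otimes\mathscr{G}\to\mathscr{O}_\mr{Z}$ appear automatically, with no further work.
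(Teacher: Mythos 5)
The paper states this proposition without proof, treating it as classical (it is essentially the local Koszul computation underlying \cite{ARINKIN2012815}), so there is no in-paper argument to compare against. Judged on its own, your proof is correct and is the standard one: the adjunction $\mr{R}\shom_\mr{X}(i_{*}\mathscr{F},i_{*}\mathscr{G})\cong i_{*}\mr{R}\shom_\mr{Z}(i^{*}i_{*}\mathscr{F},\mathscr{G})$ plus the collapse of the hyper-$\ext$ spectral sequence for part (1) — note that the collapse already follows from local freeness of the cohomology sheaves $\mathscr{H}^{-j}(i^{*}i_{*}\mathscr{F})$, so the local-formality remark, while true, is not strictly needed there — and the chain-level contraction computation on the Koszul complex for part (2), including the two checks you rightly single out as the delicate points (that $\iota_{\tilde\alpha}$ lifts the canonical generator coming from part (1), and the Koszul signs). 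Your observation that both sides of (2) are global sheaf maps, so that verification on an affine cover with arbitrary choices of regular sequence and lifts suffices, is exactly the right way to dispose of the gluing issue.

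One small imprecision in part (2): you cannot in general extend the merely coherent $\mathscr{H}$ to a bundle $\tilde{\mathscr{H}}$ and invoke the underived projection formula, since $i^{*}\tilde{\mathscr{H}}$ would acquire higher Tor terms; so the reduction to $\mathscr{F}=\mathscr{G}=\mathscr{H}=\mathscr{O}_\mr{Z}$ is not available as stated. The fix is immediate and costs nothing: strip off only $\tilde{\mathscr{F}}^{\vee}$ and $\tilde{\mathscr{G}}$, and carry $\mathscr{H}$ along as a coefficient. Since the Koszul resolution resolves the \emph{source}, the same computation as in (1) gives $\ext^{j}(i_{*}\mathscr{O}_\mr{Z},i_{*}\mathscr{H})\cong i_{*}\bigl(\wedge^{j}\mathscr{N}_{\mr{Z/X}}\otimes\mathscr{H}\bigr)$ for arbitrary coherent $\mathscr{H}$, a class there is represented by a cocycle $\wedge^{j}V\to\mathscr{H}$ (with $V=\mathscr{O}_\mr{X}^{c}$ in your notation), and precomposition with $\iota_{\tilde\omega}$ is again contraction; hence the module structure over $\ext^{\bullet}(i_{*}\mathscr{O}_\mr{Z},i_{*}\mathscr{O}_\mr{Z})\cong\wedge^{\bullet}\mathscr{N}_{\mr{Z/X}}$ is the obvious one, which is exactly what part (2) asserts.
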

For any coherent $\mathscr{F}$ on $\mr{Z}$, we have a canonical exact triangle
$$\mathscr{F}\otimes \mathscr{N}^{\vee}_{\mr{Z/X}}[1] \to \tau^{\ge -1}i^{*}i_{*}\mathscr{F} \to \mathscr{F} \to \mathscr{F}\otimes\mathscr{N}^{\vee}_{\mr{Z/X}}[2].$$
\begin{definition}\label{hkr}
 The extension class of the above triangle $\alpha_{\mathscr{F}} \in \mr{Ext}^{2}(\mathscr{F},\mathscr{F}\otimes\mathscr{N}^{\vee}_{\mr{Z/X}})$ is called the deformation-obstruction class of $\mathscr{F}$. 
\end{definition}
\2
\begin{remark}
 In the special case of $\mathscr{N}^{\vee}_{\mr{Z/X}}$ the class $\alpha_{\mathscr{N}^{\vee}_{\mr{Z/X}}}$ is sometimes called the HKR class because of its close relationship to Hochschild-Kostant-Rosenberg-like theorems.
\end{remark}
$\hspace*{3mm}$Given our locally complete intersection $i:\mr{Z \xhookrightarrow{} X}$, we have the conormal exact sequence $$0 \to \mathscr{N}^{\vee}_{\mr{Z/X}} \to i^{*}\Omega_\mr{X} \to \Omega_\mr{Z} \to 0$$ whose class is called the Kodaira-Spencer class $\mr{KS\in Ext^{1}(\Omega_\mr{Z}, \mathscr{N}^{\vee}_{\mr{Z/X}})}$.\\
$\hspace*{3mm}$We would like to extend the definitions of the Atiyah class and the obstruction class $\alpha_{\mathscr{F}}$ to the derived category. Consider the diagonal embedding $\Delta : \mr{X \xhookrightarrow{} X\times X}$. Let $\Delta(\mr{X})^{(2)}$ be the second infinitesimal neighbourhood of $\Delta(\mr{X})$ in $\mr{X\times X}$, i.e. it is defined by the square of the ideal sheaf of the diagonal. The canonical exact sequence of a closed embedding becomes  $$0 \to \Delta_{*}\Omega_\mr{X} \to \mathscr{O}_{\Delta(\mr{X})^{(2)}} \to \Delta_{*}\mathscr{O}_\mr{X} \to 0$$ Its extension class is called the universal Atiyah class $\mr{At \in Ext^{1}(\Delta_{*}\mathscr{O}_\mr{X},\Delta_{*}\Omega_\mr{X})}$. Now we use Fourier-Mukai functors to evaluate these universal classes at particular objects in the derived category: the Atiyah class of an object $\mathscr{F} \in \mr{D^{b}(X)}$ is then $\mr{At(\mathscr{F}) = \Phi_{At}(\mathscr{F})}$, where $$\Phi_\mr{At}(\mathscr{F}) : \Phi_{\Delta_{*}\mathscr{O}_\mr{X}}(\mathscr{F}) \to \Phi_{\Delta_{*}\Omega_\mr{X}}(\mathscr{F}).$$
To define the universal obstruction class, consider the closed embedding $\tilde{i} = \mr{id}\times i : \mr{Z\times Z \xhookrightarrow{} Z\times X}$, so that the complex $\tilde{i}^{*}\tilde{i}_{*}\Delta_{*}\mathscr{O}_{\mr{Z}}$ gives an $\mr{Ext}^{2}$ class $$\alpha_{\Delta_{*}\mathscr{O}_\mr{Z}} : \Delta_{*}\mathscr{O}_\mr{Z} \to \Delta_{*}\mathscr{N}^{\vee}_\mr{Z/X}[2].$$ As above we can evaluate it at any object in the derived category $\mr{D^{b}(X)}$.
\2
\begin{proposition}(\!\cite{ARINKIN2012815}, \cite{MR3158008})
 Suppose $i:\mr{Z \xhookrightarrow{} X}$ is a locally complete intersection, and consider $\mathscr{F} \in \mr{D^{b}(X)}$. The class $\alpha_{\mathscr{F}}$ is the product of $\mr{At(\mathscr{F})}$ and $\mr{KS}$, i.e. $\alpha_{\mathscr{F}} = \mr{(id_{\mathscr{F}}\otimes KS) \circ At(\mathscr{F})}$.
\end{proposition}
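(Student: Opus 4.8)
The plan is to prove the identity first for the universal classes on $\mr{Z\times Z}$ and then deduce the general statement by evaluation. All three ingredients are Fourier--Mukai transforms of morphisms between kernels supported on (thickenings of) the diagonal: the assignment $\mathscr{F}\mapsto\alpha_{\mathscr{F}}$ is the transform of $\alpha_{\Delta_{*}\mathscr{O}_\mr{Z}}\colon\Delta_{*}\mathscr{O}_\mr{Z}\to\Delta_{*}\Nb[2]$, the assignment $\mathscr{F}\mapsto\mr{At}(\mathscr{F})$ is the transform of the universal Atiyah class $\mr{At}\colon\Delta_{*}\mathscr{O}_\mr{Z}\to\Delta_{*}\Omega_\mr{Z}[1]$, and the operation $\mr{id}_{\mathscr{F}}\otimes\mr{KS}$ is the transform of the pushforward $\Delta_{*}(\mr{KS})\colon\Delta_{*}\Omega_\mr{Z}\to\Delta_{*}\Nb[1]$. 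Because the Fourier--Mukai transform is a functor compatible with the Yoneda composition of such kernels, it suffices to establish the single universal identity
\begin{equation*}
\alpha_{\Delta_{*}\mathscr{O}_\mr{Z}}=\Delta_{*}(\mr{KS})[1]\circ\mr{At}\quad\text{in }\mr{Hom}_{\mr{D^{b}(Z\times Z)}}\big(\Delta_{*}\mathscr{O}_\mr{Z},\,\Delta_{*}\Nb[2]\big).
\end{equation*}

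The mechanism producing $\mr{KS}$ is the factorisation of the graph $\Gamma_{i}\colon\mr{Z \xhookrightarrow{} Z\times X}$, $z\mapsto(z,i(z))$, through the tower $\mr{Z}\xrightarrow{\Delta}\mr{Z\times Z}\xrightarrow{\tilde{i}}\mr{Z\times X}$ with $\tilde{i}=\mr{id}\times i$. First I would record the conormal bundles along the tower: that of the diagonal is $\mathscr{N}^{\vee}_{Z/Z\times Z}\cong\Omega_\mr{Z}$; that of $\mr{Z\times Z}$ in $\mr{Z\times X}$ is $\mr{pr}_{2}^{*}\Nb$, whence $\Delta_{*}\mathscr{O}_\mr{Z}\otimes\mathscr{N}^{\vee}_{Z\times Z/Z\times X}\cong\Delta_{*}\Nb$ by the projection formula; and that of the graph is $\mathscr{N}^{\vee}_{Z/Z\times X}\cong i^{*}\Omega_\mr{X}$. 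With these identifications the conormal sequence of the composite embedding becomes
$$0\to\Nb\to i^{*}\Omega_\mr{X}\to\Omega_\mr{Z}\to 0,$$
which is exactly the sequence whose extension class is $\mr{KS}$.

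I would then compare the two second-order constructions along the tower. The universal Atiyah class is the extension class of the second infinitesimal neighbourhood of the diagonal, while the universal obstruction class is the class of the triangle governing $\tau^{\ge-1}\tilde{i}^{*}\tilde{i}_{*}\Delta_{*}\mathscr{O}_\mr{Z}$, the first-order part of the self-intersection of $\Delta_{*}\mathscr{O}_\mr{Z}$ along $\tilde{i}$. The transitivity of the truncated jet (equivalently, cotangent) complexes along the tower refines the displayed conormal sequence, and the associated octahedral diagram relates these two triangles. Chasing it --- with $\tau^{\ge-1}$ applied throughout so that only the $\mathscr{H}^{0}$ and $\mathscr{H}^{-1}$ terms survive and the higher wedge powers $\wedge^{\ge2}\Nb$ are discarded --- should exhibit $\alpha_{\Delta_{*}\mathscr{O}_\mr{Z}}$ as $\mr{At}$ followed by the connecting morphism $\Delta_{*}(\mr{KS})[1]$, giving the universal identity.

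The main obstacle is this last identification: one must check that the connecting morphism extracted from the octahedral diagram is \emph{precisely} $\mr{KS}$, with the correct sign and normalisation, and not merely some class with the same source and target. I expect this to force a descent to a local model in which $\mr{Z \xhookrightarrow{} X}$ is cut out by a regular sequence, where $i^{*}i_{*}$ is represented by an explicit Koszul complex and the second-order jets and their connecting map can be computed by hand and then globalised by naturality. The accompanying bookkeeping --- keeping the truncations consistent so that the two jet complexes are compatibly two-step filtered and the contributions of $\wedge^{\ge2}\Nb$ are correctly suppressed --- is the technical core beneath the clean formula.
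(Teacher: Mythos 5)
You should first note that the paper itself contains no proof of this proposition: it is imported verbatim from Arinkin--C\u{a}ld\u{a}raru and Huybrechts--Thomas, so the comparison is really with the proofs in those references. Your architecture is the same as theirs: reduce to a single universal identity between kernels supported on thickenings of the diagonal, then evaluate at $\mathscr{F}$ using that morphisms of Fourier--Mukai kernels induce natural transformations compatibly with composition. Your conormal bookkeeping along $\mr{Z}\xrightarrow{\Delta}\mr{Z\times Z}\xrightarrow{\tilde i}\mr{Z\times X}$ is also correct ($\mathscr{N}^{\vee}_{\mr{Z/Z\times Z}}\cong\Omega_\mr{Z}$, $\mathscr{N}^{\vee}_{\mr{Z\times Z/Z\times X}}\cong\mr{pr}_{2}^{*}\Nb$, graph conormal $i^{*}\Omega_\mr{X}$, recovering the Kodaira--Spencer sequence), and it is indeed the geometric source of the identity.

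The genuine gap is exactly the step you flag, and your proposed repair does not close it. The octahedron chase is only asserted (``should exhibit''), and the fallback --- compute the connecting morphism in a local Koszul model and ``globalise by naturality'' --- cannot, as stated, identify the two classes: you are comparing elements of $\mr{Ext}^{2}_{\mr{Z\times Z}}(\Delta_{*}\mathscr{O}_\mr{Z},\Delta_{*}\Nb)$, and classes here that agree on an open cover need not agree globally. By the local-to-global $\mr{Ext}$ sequence, the subgroup of locally trivial classes has subquotients inside $\mr{H}^{1}(\mr{Z},\Omega_\mr{Z}\otimes\Nb)$ and $\mr{H}^{2}(\mr{Z},\Nb)$ (using $\ext^{0}\cong\Delta_{*}\Nb$ and $\ext^{1}\cong\Delta_{*}(\Omega_\mr{Z}\otimes\Nb)$), and these groups are nonzero in precisely the situations this paper cares about (for a Lagrangian, $\Nb\cong\Omega_\mr{L}$). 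So no amount of local sign-checking determines either side. What the cited proofs actually supply at this point is a \emph{global} comparison: both $\alpha_{\Delta_{*}\mathscr{O}_\mr{Z}}$ and the Yoneda splice of $\mr{At}$ with $\Delta_{*}(\mr{KS})[1]$ are realised by explicit $2$-extensions of sheaves on $\mr{Z\times X}$ built from the ideal sheaves $\mathscr{I}_{\mr{Z\times Z}}\subset\mathscr{I}_{\Gamma}$ of $\mr{Z\times Z}$ and of the graph $\Gamma$ of $i$ (terms such as $\mathscr{O}_{\mr{Z\times X}}/\mathscr{I}_{\mr{Z\times Z}}\mathscr{I}_{\Gamma}$ and $\mathscr{I}_{\Gamma}/\mathscr{I}_{\mr{Z\times Z}}\mathscr{I}_{\Gamma}$), related by globally defined morphisms of extensions; the local regular-sequence model is then used only to check that the induced maps on $\mathscr{H}^{0}$ and $\mathscr{H}^{-1}$ are the identity, which genuinely is a local statement. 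To make your sketch a proof, replace ``local computation plus naturality'' by such a global morphism of $2$-extensions (or a globally constructed morphism of the two truncation triangles); without it the decisive identification is not established.
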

The next theorem gives a geometric interpretation of the obstruction class defined above (and justifies the terminology!).
\2
\begin{theorem}(Huybrechts-Thomas \cite{MR3158008}, Grivaux \cite{2015arXiv150504414G})\label{thm:th}
 Let $j:\mr{X \xhookrightarrow{} X^{(1)}}$ be a first order thickeing of a Noetherian separated scheme. Suppose $\mathscr{F} \in \mr{D^{b}(X)}$ is a perfect complex. Then $\mathscr{F}$ extends to a perfect complex $\mathscr{F}^{(1)}$ on $\mr{X^{(1)}}$ iff $\alpha_{\mathscr{F}}=0$.
\end{theorem}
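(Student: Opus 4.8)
The plan is to treat this as a standard deformation–obstruction problem and to match the resulting obstruction with the class $\alpha_{\mathscr{F}}$ of \cref{hkr}. Write $\mathscr{I}$ for the ideal sheaf of $\mr{X}$ in $\mr{X^{(1)}}$; since the thickening is first order, $\mathscr{I}^{2}=0$, so $\mathscr{I}$ is a sheaf on $\mr{X}$ and plays the role of the conormal bundle. First I would use perfection of $\mathscr{F}$ to represent it, on the members $\{\mr{U}_{\beta}\}$ of an affine cover, by bounded complexes $(\mathscr{E}^{\bullet},\mr{d})$ of free sheaves of finite rank, and choose free lifts $\tilde{\mathscr{E}}^{\bullet}$ over $\mr{X^{(1)}}$ together with lifts $\tilde{\mr{d}}$ of the differentials. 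Such lifts exist locally because free modules and their maps lift over a nilpotent thickening; the whole point is that $\tilde{\mr{d}}^{2}$ need not vanish. As $\tilde{\mr{d}}^{2}$ reduces to $\mr{d}^{2}=0$ on $\mr{X}$, it takes values in $\mathscr{I}\cdot\tilde{\mathscr{E}}^{\bullet}\cong\mathscr{I}\otimes\mathscr{E}^{\bullet}$, and, $\mathscr{I}$ being square-zero, it defines a degree-$(+2)$ cochain $\beta$ in the complex $\shom(\mathscr{E}^{\bullet},\mathscr{E}^{\bullet}\otimes\mathscr{I})$.

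Next I would carry out the bookkeeping: $\beta$ is closed, changing $\tilde{\mr{d}}$ by a cochain $\gamma$ alters $\beta$ by $\partial\gamma$, and passing between two affines changes $\beta$ by a coboundary as well. Hence the local data assemble into a canonical global class $\mr{ob}(\mathscr{F})\in\mr{Ext}^{2}(\mathscr{F},\mathscr{F}\otimes\mathscr{I})$, read off from the hyper-Čech cohomology of $\mr{R}\shom(\mathscr{F},\mathscr{F}\otimes\mathscr{I})$. The equivalence ``extends iff $\mr{ob}(\mathscr{F})=0$'' then follows from the torsor/obstruction formalism: if $\mathscr{F}^{(1)}$ exists, its local presentations furnish lifts with $\tilde{\mr{d}}^{2}=0$, forcing $\mr{ob}(\mathscr{F})=0$; conversely, a trivialisation of $\mr{ob}(\mathscr{F})$ is precisely a global $\gamma$ correcting the $\tilde{\mr{d}}$ so that $\tilde{\mr{d}}^{2}=0$, and the corrected complex is the sought extension. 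That $\mr{ob}(\mathscr{F})$ is a \emph{complete} obstruction, and not merely a necessary condition, is the substance of this step and is where the cochain-level analysis must be done with care.

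The main obstacle is the final identification $\mr{ob}(\mathscr{F})=\alpha_{\mathscr{F}}$. Here I would invoke the description of $\alpha_{\mathscr{F}}$ as the Yoneda product $(\mr{id}_{\mathscr{F}}\otimes\mr{KS})\circ\mr{At}(\mathscr{F})$ from the preceding proposition, where $\mr{KS}\in\mr{Ext}^{1}(\Omega_{\mr{X}},\mathscr{I})$ is the class of the conormal sequence $0\to\mathscr{I}\to\Omega_{\mr{X^{(1)}}}|_{\mr{X}}\to\Omega_{\mr{X}}\to 0$ of the thickening and $\mr{At}(\mathscr{F})\in\mr{Ext}^{1}(\mathscr{F},\mathscr{F}\otimes\Omega_{\mr{X}})$. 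The guiding idea is that the failure $\tilde{\mr{d}}^{2}\neq 0$ measures simultaneously the obstruction to extending the complex across the thickening (governed by $\mr{KS}$) and the obstruction to a global connection on $\mathscr{E}^{\bullet}$ (recorded by $\mr{At}(\mathscr{F})$). Concretely, I would compute both classes in the same local trivialisations, exhibit $\mr{At}(\mathscr{F})$ as the Čech cocycle of differences of local connections, and check that contracting it against the extension class of the conormal sequence reproduces $\beta$. Matching these two a priori different cocycles — one coming from the jet/Atiyah construction via the second neighbourhood of the diagonal, the other from the naive lift of differentials — is the delicate point, and is exactly where the results of Huybrechts–Thomas and Grivaux enter; granting it, $\mr{ob}(\mathscr{F})=\alpha_{\mathscr{F}}$ and the theorem follows.
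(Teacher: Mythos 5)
The paper does not prove \cref{thm:th}: it is imported wholesale from Huybrechts--Thomas \cite{MR3158008} and Grivaux \cite{2015arXiv150504414G}, so there is no internal argument to compare you against; your proposal has to stand on its own, and as written it does not.

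The first genuine gap is the local-to-global assembly in your second paragraph. Perfection only gives you \emph{local} strictifications $(\mathscr{E}^{\bullet},\mr{d})$ on the charts, and on an overlap two such models are related merely by a quasi-isomorphism defined up to homotopy. Your claim that ``passing between two affines changes $\beta$ by a coboundary'' is therefore not a cochain-level identity: to produce a hyper-\v{C}ech cocycle at all you must choose comparison maps on double overlaps, homotopies making the failures of $\tilde{\mr{d}}$-compatibility explicit, and higher coherences on triple overlaps; symmetrically, in the converse direction, after correcting the local lifts you must \emph{glue} them into a perfect complex on $\mr{X^{(1)}}$, and gluing objects from local data is exactly what fails naively in triangulated categories. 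This coherence problem is the real mathematical content of the theorem in the stated generality (Noetherian separated, no global resolution of $\mathscr{F}$ by vector bundles): it is why Huybrechts--Thomas work with the universal Atiyah class on the second neighbourhood of the diagonal and Fourier--Mukai evaluation --- the same formalism this paper sets up before stating \cref{thm:th} --- rather than with naive lifts of differentials, and why Grivaux is cited alongside them for settings where global resolutions are unavailable. If you add the hypothesis that $\mathscr{F}$ is globally quasi-isomorphic to a bounded complex of locally free sheaves (e.g.\ $\mr{X}$ quasi-projective), your \v{C}ech argument can be pushed through along standard lines; without it, the bookkeeping you describe does not exist yet.

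The second gap is circularity at the crux: the identification $\mr{ob}(\mathscr{F})=\alpha_{\mathscr{F}}$ is exactly where you write that ``the results of Huybrechts--Thomas and Grivaux enter; granting it, the theorem follows'' --- but those results \emph{are} the theorem to be proved, so nothing has been established beyond the existence of some ad hoc obstruction class. To repair this you would need to compare your naive-lift cocycle directly with the class of the truncation triangle defining $\alpha_{\mathscr{F}}$ in \cref{hkr}, computed for $j:\mr{X}\xhookrightarrow{}\mr{X^{(1)}}$ via $\tau^{\ge -1}j^{*}j_{*}\mathscr{F}$. Note also that the ``only if'' direction needs none of your machinery: an extension $\mathscr{F}^{(1)}$ together with the sequence $0\to\mathscr{I}\to\mathscr{O}_{\mr{X}^{(1)}}\to j_{*}\mathscr{O}_{\mr{X}}\to 0$ and the projection formula produces precisely the kind of square-zero extension whose obstruction is $\alpha_{\mathscr{F}}$ (compare \cref{thm:th2}, applied with $\mr{m}=\mr{id}$), so $\alpha_{\mathscr{F}}=0$ follows softly from the paper's own definitions; it is the converse --- building $\mathscr{F}^{(1)}$ from the vanishing of $\alpha_{\mathscr{F}}$ --- that concentrates all the difficulty, and that is exactly the part your proposal leaves to the literature.
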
\3
\begin{theorem}(Arinkin-Căldăraru \cite{ARINKIN2012815})\label{thm:th2}
 Let $i:\mr{Z} \xhookrightarrow{} \mr{X}$ be a locally complete intersection. Suppose $\mathscr{F}$ is locally free, $\mathscr{F}_{0}$ - quasi-coherent on $\mr{Z}$. Consider a morphism $\mr{m: \mathscr{F}\otimes \Nb \to \mathscr{F}_{0}}$. Then the obstruction to existence of an exact sequence $$0 \to i_{*}\mathscr{F}_{0} \to \mathscr{G} \to i_{*}\mathscr{F} \to 0,$$ such that the ideal sheaf of $\mr{Z}$ acts on $\mathscr{G}$ via $\mr{m}$ is $\mr{m} \circ \alpha_{\mathscr{F}}$. 
\end{theorem}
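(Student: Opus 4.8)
The plan is to translate the module-theoretic extension problem into the derived category via adjunction, and then to read off the obstruction directly from the triangle that defines $\alpha_{\mathscr{F}}$ in \cref{hkr}. First I would record the elementary observation that any extension $0 \to i_{*}\mathscr{F}_{0} \to \mathscr{G} \to i_{*}\mathscr{F} \to 0$ of $\mathscr{O}_\mr{X}$-modules is automatically annihilated by $\mathscr{I}_\mr{Z}^{2}$: the ideal $\mathscr{I}_\mr{Z}$ kills the quotient $i_{*}\mathscr{F}$, hence maps $\mathscr{G}$ into $i_{*}\mathscr{F}_{0}$, which it then kills. Thus $\mathscr{G}$ is a module over the second infinitesimal neighbourhood $2\mr{Z}$, and the residual $\mathscr{I}_\mr{Z}$-action descends to a morphism $m_{\mathscr{G}}:\mathscr{F}\otimes\Nb \to \mathscr{F}_{0}$ on $\mr{Z}$. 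The theorem then amounts to the assertion that the achievable actions $m_{\mathscr{G}}$ are exactly those $m$ with $m\circ\alpha_{\mathscr{F}}=0$, the class $m\circ\alpha_{\mathscr{F}}\in\mr{Ext}^{2}(\mathscr{F},\mathscr{F}_{0})$ being the obstruction.

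Next I would set up the homological bookkeeping. Extensions are classified by $\mr{Ext}^{1}_{\mr{X}}(i_{*}\mathscr{F},i_{*}\mathscr{F}_{0})=\mr{Hom}_{\mr{D(Z)}}(i^{*}i_{*}\mathscr{F},\mathscr{F}_{0}[1])$, using the adjunction $i^{*}\dashv i_{*}$ and that both sheaves live on $\mr{Z}$. Since $\mathscr{F}_{0}[1]\in\mr{D}^{=-1}(\mr{Z})$ while $\tau^{\le -2}i^{*}i_{*}\mathscr{F}\in\mr{D}^{\le -2}(\mr{Z})$, the $t$-structure axiom forces $\mr{Hom}(\tau^{\le -2}i^{*}i_{*}\mathscr{F},\mathscr{F}_{0}[1])=0$, so the extension group is already computed by $\tau^{\ge -1}i^{*}i_{*}\mathscr{F}$. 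I would then apply $\mr{Hom}(-,\mathscr{F}_{0}[1])$ to the defining triangle
\[
\mathscr{F}\otimes\Nb[1]\xrightarrow{\ f\ }\tau^{\ge -1}i^{*}i_{*}\mathscr{F}\longrightarrow\mathscr{F}\xrightarrow{\ \alpha_{\mathscr{F}}\ }\mathscr{F}\otimes\Nb[2].
\]
The resulting long exact sequence contains the segment $\mr{Hom}(\tau^{\ge -1}i^{*}i_{*}\mathscr{F},\mathscr{F}_{0}[1])\xrightarrow{f^{*}}\mr{Hom}(\mathscr{F}\otimes\Nb,\mathscr{F}_{0})\xrightarrow{\alpha_{\mathscr{F}}^{*}}\mr{Ext}^{2}(\mathscr{F},\mathscr{F}_{0})$, where $m$ lives in the middle term and, after tracking the shifts, the connecting map is $\alpha_{\mathscr{F}}^{*}(m)=m\circ\alpha_{\mathscr{F}}$. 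By exactness, $m$ lies in the image of $f^{*}$ — i.e. is realised by some extension — if and only if $m\circ\alpha_{\mathscr{F}}=0$, which is exactly the claim.

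The crux, and the step I expect to cost the most effort, is the identification of $f^{*}$ with the assignment $\mathscr{G}\mapsto m_{\mathscr{G}}$: one must verify that pulling an extension class back along the inclusion of the bottom cohomology $\mathscr{F}\otimes\Nb[1]=\mathscr{H}^{-1}(i^{*}i_{*}\mathscr{F})[1]\hookrightarrow\tau^{\ge -1}i^{*}i_{*}\mathscr{F}$ literally computes the induced $\mathscr{I}_\mr{Z}$-action. I would establish this by reducing to the universal case $\mathscr{F}_{0}=\mathscr{F}\otimes\Nb$ with $m=\mr{id}$, where the relevant extension datum is precisely the one packaged by $\tau^{\ge -1}i^{*}i_{*}\mathscr{F}$ and whose class is $\alpha_{\mathscr{F}}$ by \cref{hkr}; the general statement then follows by pushout along $m$ together with naturality of the long exact sequence in the variable $\mathscr{F}_{0}$. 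Locally this is transparent from the Koszul resolution of $i_{*}\mathscr{O}_\mr{Z}$, whose lowest differential is exactly the conormal contraction realising the ideal action, so the only genuine labour beyond this conceptual point is the careful bookkeeping of shifts needed to pin down the sign-correct identity $\alpha_{\mathscr{F}}^{*}(m)=m\circ\alpha_{\mathscr{F}}$.
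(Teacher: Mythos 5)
The paper contains no proof of \cref{thm:th2}: the statement is imported from Arinkin--C\u{a}ld\u{a}raru \cite{ARINKIN2012815}, so there is no internal argument to compare against, and your write-up has to stand on its own. On its own it is essentially correct and takes the natural route: the preliminary observation that $\mathscr{I}_{\mr{Z}}^{2}\mathscr{G}=0$ and that the residual $\mathscr{I}_{\mr{Z}}$-action descends to $m_{\mathscr{G}}:\mathscr{F}\otimes\Nb\to\mathscr{F}_{0}$ is right; the adjunction $\mr{Ext}^{1}_{\mr{X}}(i_{*}\mathscr{F},i_{*}\mathscr{F}_{0})\cong\mr{Hom}_{\mr{D(Z)}}(i^{*}i_{*}\mathscr{F},\mathscr{F}_{0}[1])$ is right; the $t$-structure vanishing that lets you replace $i^{*}i_{*}\mathscr{F}$ by $\tau^{\ge-1}i^{*}i_{*}\mathscr{F}$ is right (note you need the companion vanishing $\mr{Hom}(\tau^{\le-2}i^{*}i_{*}\mathscr{F},\mathscr{F}_{0})=0$ for injectivity as well as the one you state for surjectivity, but it holds for the same degree reason); and the long exact sequence of the triangle from \cref{hkr} does identify the obstruction to $m$ lying in the image of $f^{*}$ with $m\circ\alpha_{\mathscr{F}}$, up to signs that are immaterial since only the vanishing of the class matters.

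The one step that does not work as written is your proposed reduction of the crux --- identifying $f^{*}$ with $\mathscr{G}\mapsto m_{\mathscr{G}}$ --- to ``the universal case $\mathscr{F}_{0}=\mathscr{F}\otimes\Nb$, $m=\mr{id}$.'' There is no universal extension to compute with: by the very theorem being proved (equivalently, via \cref{thm:th}), an extension realising $m=\mr{id}$ exists precisely when $\alpha_{\mathscr{F}}=0$, so in general the distinguished element you want to push out along $m$ does not exist; and since the source functor $\mr{Ext}^{1}_{\mr{X}}(i_{*}\mathscr{F},i_{*}(-))$ is not corepresented by a sheaf, a Yoneda-style universal-element argument has nothing to evaluate on. Fortunately your fallback closes the gap and should be promoted to the main argument: the two candidate maps $\mathscr{F}\otimes\Nb\to\mathscr{F}_{0}$ attached to a given extension class agree if and only if they agree locally, both assignments commute with restriction to open subsets, and on an affine open where $\mr{Z}$ is cut out by a regular sequence $(f_{1},\dots,f_{c})$ and $\mathscr{F}$ is free, the Koszul resolution of $i_{*}\mathscr{F}$ exhibits $f^{*}$ of the class of $\mathscr{G}$ as exactly the cocycle ``lift a section of $\mathscr{F}$ to $\mathscr{G}$, multiply by $f_{j}$, land in $\mathscr{F}_{0}$,'' which is $m_{\mathscr{G}}$ by definition. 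With that substitution the proof is complete; as it stands, the circular universal-case reduction is the only genuine defect.
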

$\hspace*{5mm}$We are ready explain the relationship between these obstruction classes and the differentials on the second page of the spectral sequence. Thinking of $\alpha_{\mathscr{L}}$ as an extension class, it is clear that $\delta_{0}(i^{*}i_{*}\mathscr{L})=\alpha_{\mathscr{L}}$ as $i^{*}i_{*}\mathscr{L}$ is concentrated in non-positive degrees. For the other $\mr{Ext}$ classes, we have to work harder. Let us factor $i : \mr{Z\xhookrightarrow{} X}$ as follows: $$ \mr{Z} \xhookrightarrow{j}
\mr{Z}^{(1)} \xhookrightarrow{i^{(1)}} \mr{X},$$ where as usual $\mr{Z}^{(1)}$ is the first infinitesimal neighbourhood of $\mr{Z}$ in $\mr{X}$. 
In the case of $\mathscr{O}_\mr{Z}$, one can calculate $\delta_{1}(i^{*}i_{*}\mathscr{O}_\mr{Z})$ in terms of the deformation-obstruction classes:\2
\begin{proposition}(Arinkin-Căldăraru)
 Let $i : \mr{Z \xhookrightarrow{} X}$ be a locally complete intersection. Then we have \begin{equation*}\delta_{1}(i^{*}i_{*}\mathscr{O}_\mr{Z}) = \alpha_{\Nb}.\end{equation*}
\end{proposition}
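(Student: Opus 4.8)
The plan is to exhibit both $\delta_{1}(i^{*}i_{*}\mathscr{O}_\mr{Z})$ and $\alpha_{\Nb}$ as the obstruction to one and the same extension problem and to invoke \autoref{thm:th2}. Note first that the two sides live a priori in different groups: by the computation of the cohomology sheaves of $i^{*}i_{*}\mathscr{O}_\mr{Z}$ we have $\delta_{1}(i^{*}i_{*}\mathscr{O}_\mr{Z})\in\mr{Ext}^{2}(\Nb,\wedge^{2}\Nb)$, whereas $\alpha_{\Nb}\in\mr{Ext}^{2}(\Nb,\Nb\otimes\Nb)$, so the asserted equality is to be read after composing with the antisymmetrisation $m:\Nb\otimes\Nb\to\wedge^{2}\Nb$. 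That this projection is genuinely needed is visible in codimension one, where $\wedge^{2}\Nb=0$ forces both $\delta_{1}$ and $m\circ\alpha_{\Nb}$ to vanish although $\alpha_{\Nb}$ itself need not.

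First I would unwind $\delta_{1}$. Since $\mathscr{O}_\mr{Z}$ lifts to the first infinitesimal neighbourhood, the discussion preceding \autoref{thm:th2} gives $\alpha_{\mathscr{O}_\mr{Z}}=\delta_{0}(i^{*}i_{*}\mathscr{O}_\mr{Z})=0$, so $\tau^{\ge -1}i^{*}i_{*}\mathscr{O}_\mr{Z}\cong\mathscr{O}_\mr{Z}\oplus\Nb[1]$ already splits. Hence $\delta_{1}$ is exactly the obstruction to lifting the resulting canonical copy of $\Nb[1]$ through the single extra cohomological layer $\wedge^{2}\Nb$ sitting in degree $-2$; concretely it is the composite $\Nb[1]\hookrightarrow\tau^{\ge -1}i^{*}i_{*}\mathscr{O}_\mr{Z}\to(\tau^{\le -2}i^{*}i_{*}\mathscr{O}_\mr{Z})[1]\to\wedge^{2}\Nb[3]$ coming from the truncation triangle, i.e.\ the extension class of $\tau^{[-2,-1]}i^{*}i_{*}\mathscr{O}_\mr{Z}$.

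The heart of the matter is to recognise the extension class of $\tau^{[-2,-1]}i^{*}i_{*}\mathscr{O}_\mr{Z}$ as the obstruction governed by \autoref{thm:th2}. Applying that theorem with $\mathscr{F}=\Nb$, $\mathscr{F}_{0}=\wedge^{2}\Nb$ and $m$ produces $m\circ\alpha_{\Nb}$ as the obstruction to the existence of a sheaf $\mathscr{G}$ on $\mr{X}$ with $0\to i_{*}\wedge^{2}\Nb\to\mathscr{G}\to i_{*}\Nb\to0$ on which $\mathscr{I}_\mr{Z}$ acts through $m$. I would then show that such a $\mathscr{G}$ exists precisely when $\tau^{[-2,-1]}i^{*}i_{*}\mathscr{O}_\mr{Z}$ splits, and that the ideal action is forced to be $m$: a local computation parallel to \autoref{prop2} shows that the conormal bundle $\Nb=\mathscr{I}_\mr{Z}/\mathscr{I}_\mr{Z}^{2}$ acts on the cohomology layers $\wedge^{\bullet}\Nb$ of $i^{*}i_{*}\mathscr{O}_\mr{Z}$ by exterior multiplication, and on the two bottom layers this is exactly $m:\Nb\otimes\Nb\to\wedge^{2}\Nb$. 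Tracing the identifications then yields $\delta_{1}(i^{*}i_{*}\mathscr{O}_\mr{Z})=m\circ\alpha_{\Nb}$, which is the assertion.

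I expect the delicate step to be this last dictionary — matching the connecting map of the canonical filtration of $i^{*}i_{*}\mathscr{O}_\mr{Z}$ with the abelian-category obstruction of \autoref{thm:th2}, keeping the shift and the projection $m$ consistent and checking that the induced ideal action is the exterior product and not some twist of it. To pin this down reliably, and to fix signs, I would reduce to the universal situation by deformation to the normal cone: both $\delta_{1}$ and $\alpha_{\Nb}$ are natural in the pair $(\mr{X},\mr{Z})$ and depend only on a neighbourhood of $\mr{Z}$, so one may replace $\mr{X}$ by the total space of $\mathscr{N}_\mr{Z/X}$ with $\mr{Z}$ the zero section. There $i^{*}i_{*}\mathscr{O}_\mr{Z}$ is computed by an explicit Koszul complex, both classes become direct calculations with the HKR class, and the comparison can be verified outright.
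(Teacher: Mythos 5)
Your reduction of $\delta_{1}(i^{*}i_{*}\mathscr{O}_{\mr{Z}})$ to the extension class of $\tau^{[-2,-1]}i^{*}i_{*}\mathscr{O}_{\mr{Z}}$ is correct, and bringing in \autoref{thm:th2} with $(\mathscr{F},\mathscr{F}_{0},m)=(\Nb,\wedge^{2}\Nb,\mr{Alt})$ is the right instinct. But there are two genuine gaps. The first is your treatment of the $\mr{Sym}/\mr{Alt}$ discrepancy: you declare that the proposition can only hold after composing with the antisymmetrisation $m$, and you justify this with the claim that in codimension one $\alpha_{\Nb}$ ``need not'' vanish while $\wedge^{2}\Nb=0$. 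This is false, and it is precisely the step of the paper's proof you are missing. Applying \autoref{thm:th2} to the canonical exact sequence $0 \to i_{*}\mr{Sym}^{2}\Nb=\mathscr{I}^{2}_{\mr{Z}}/\mathscr{I}^{3}_{\mr{Z}} \to \mathscr{I}_{\mr{Z}}/\mathscr{I}^{3}_{\mr{Z}} \to i_{*}\Nb \to 0$, on which the ideal acts through the symmetric multiplication, shows that $\mr{Sym}\circ\alpha_{\Nb}=0$ always: the obstruction class of the conormal bundle is skew-symmetric, so under the characteristic-zero splitting $\Nb\otimes\Nb\cong\mr{Sym}^{2}\Nb\oplus\wedge^{2}\Nb$ one has $\mr{Alt}\circ\alpha_{\Nb}=\alpha_{\Nb}$ and the proposition holds with no projection. (In codimension one this forces $\alpha_{\Nb}=0$, as it must: $\mathscr{I}_{\mr{Z}}/\mathscr{I}^{3}_{\mr{Z}}$ is a line bundle on $\mr{Z}^{(1)}$ extending $\Nb$, so $\alpha_{\Nb}$ vanishes by \autoref{thm:th}.) Even if completed, your argument only yields the weaker identity $\delta_{1}=\mr{Alt}\circ\alpha_{\Nb}$, from which $\alpha_{\Nb}$ cannot be recovered — and it is the full equality that feeds the formality criterion of Arinkin--Căldăraru quoted right after the proposition.

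The second gap is in how you propose to close the ``delicate step.'' Showing that the sheaf $\mathscr{G}$ of \autoref{thm:th2} exists if and only if $\tau^{[-2,-1]}i^{*}i_{*}\mathscr{O}_{\mr{Z}}$ splits proves only the simultaneous \emph{vanishing} of the two classes, not their equality in $\mr{Ext}^{2}(\Nb,\wedge^{2}\Nb)$; equality requires an actual natural comparison of the two obstruction theories. The paper supplies this by factoring $i$ through the first infinitesimal neighbourhood $j:\mr{Z}\xhookrightarrow{}\mr{Z}^{(1)}$: there the isomorphism $\tau^{<0}j^{*}j_{*}\mathscr{O}_{\mr{Z}}\cong j^{*}j_{*}\Nb[1]$ gives $\delta_{1}(j^{*}j_{*}\mathscr{O}_{\mr{Z}})=\delta_{0}(j^{*}j_{*}\Nb)=\alpha_{\Nb}$ on the nose (an equality of classes, not a vanishing criterion), and a truncation comparison between $i^{*}i_{*}$ and $j^{*}j_{*}$ transports this to $i$ at the cost of the $\mr{Alt}$, which is then removed as above. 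Your fallback — verifying the identity after deformation to the normal cone — cannot do this work: when $\mr{X}$ is the total space of $\mathscr{N}_{\mr{Z/X}}$ and $\mr{Z}$ the zero section, the embedding splits, $i^{*}i_{*}\mathscr{O}_{\mr{Z}}$ is formal, and both $\delta_{1}$ and $\alpha_{\Nb}$ vanish identically, so the comparison there is vacuous. These classes are second-order invariants of the embedding — they depend on $\mathscr{O}_{\mr{X}}/\mathscr{I}^{3}_{\mr{Z}}$, not merely on $\mathscr{N}_{\mr{Z/X}}$ — and they are not constant along the degeneration, so no specialisation to the split model can transfer the identity back to a general pair $(\mr{X},\mr{Z})$.
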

\begin{proof}
Consider the exact sequence 
\begin{equation}\label{eq:3} 0 \to j_{*}\Nb \to \mathscr{O}_\mr{Z^{(1)}} \to j_{*}\mathscr{O}_\mr{Z}\to 0 \end{equation} 
apply $j^{*}$ to get an exact triangle $$j^{*}j_{*}\Nb \to \mathscr{O}_\mr{Z} \to j^{*}j_{*}\mathscr{O}_\mr{Z} \to j^{*}j_{*}\Nb[1],$$ hence an isomorphism \begin{equation}\label{eq:10}\tau^{<0}j^{*}j_{*}\mathscr{O}_\mr{Z} \cong j^{*}j_{*}\Nb[1]\end{equation} in $\mr{D^{b}(X)}$. This implies that $$\delta_{1}(j^{*}j_{*}\mathscr{O}_\mr{Z}) = \delta_{1}(j^{*}j_{*}\Nb[1]) = \delta_{0}(j^{*}j_{*}\Nb) = \alpha_{\Nb}.$$
We let $\mr{Alt} : \mathscr{F}^{\otimes p} \to \bigwedge^{p} \mathscr{F}$ be the natural quotient. Consider an exact triangle $$\mr{Sym}^{2}\Nb[2] \to \tau^{\ge -2}j^{*}j_{*}\mathscr{O}_\mr{Z} \to \mathscr{C} \to \mr{Sym}^{2}\Nb[3],$$ where the first map is the inclusion $$\mr{Sym^{2}\Nb \to \Nb\otimes \Nb}$$ followed by the canonical morphisms $$\Nb\otimes \Nb \to \mathscr{H}^{-2}(j^{*}j_{*}\mathscr{O}_\mr{Z}) \text{ and } \mathscr{H}^{-2}(j^{*}j_{*}\mathscr{O}_\mr{Z}) \to \tau^{\ge- 2}j^{*}j_{*}\mathscr{O}_\mr{Z}.$$ By construction we have $\delta_{1}(\mathscr{C}) = \mr{Alt} \circ \delta_{1}(j^{*}j_{*}\mathscr{O}_\mr{Z})$, and since $$\tau^{\ge-2}i^{*}i_{*}\mathscr{O}_\mr{Z} \to \tau^{\ge -2}j^{*}j_{*}\mathscr{O}_\mr{Z} \to \mathscr{C}$$ is an isomorphism in $\mr{D^{b}(X)}$, we conclude $$\delta_{1}(i^{*}i_{*}\mathscr{O}_\mr{Z}) = \delta_{1}(\mathscr{C}) = \mr{Alt} \circ \delta_{1}(j^{*}j_{*}\mathscr{O}_\mr{Z})= \mr{Alt}\circ \alpha_{\Nb}.$$ 
$\autoref{thm:th2}$, and the exact sequence 
$$ 0 \to i_{*}\mr{Sym}^{2}\Nb=\mathscr{I}^{2}_\mr{Z}/ \mathscr{I}^{3}_\mr{Z} \to \mathscr{I}_\mr{Z}/ \mathscr{I}^{3}_\mr{Z} \to i_{*}\Nb \to 0$$ imply that $$\mr{Alt}\circ \alpha_{\Nb} =\alpha_{\Nb},$$ since $\mr{Sym}\circ\alpha_{\Nb}=0$, that is, the obstruction class of the conormal bundle is skew-symmetric and we are done.
\end{proof}
\2
\begin{remark}
It would be nice to have similar geometric interpretation for the higher $\delta$'s in the general case.
\end{remark}
\2
\begin{remark}
 In general, if $\mathscr{F} \in \mr{Perf}(\mr{Z})$ lifts to the first infinitesimal neighbourhood, we have $\delta_{1}(i^{*}i_{*}\mathscr{F}) = \alpha_{\mathscr{F}\otimes \Nb}$. We note that Arinkin and Căldăraru in \cite{ARINKIN2012815} show that $i^{*}i_{*}\mathscr{F}$ is formal iff $\alpha_{\mathscr{F}}$ and $\alpha_{\mathscr{F}\otimes \Nb}$ vanish. In the case of structure sheaf, formality is understood in the (stronger) sense of differential graded algebras.
\end{remark}
\2
\begin{proposition}
 Let $i : \mr{Z \xhookrightarrow{} X}$ be a locally complete intersection, $c=\mr{codim(Z,X)}$, and consider $\mathscr{L} \in \mr{Pic(Z)}$. Define $$\tilde\delta_{q}(\mathscr{L}) \coloneqq \delta_{c-q}(i^{*}i_{*}\mathscr{L})\otimes \mr{id_{\mathscr{L}^{\vee}}\otimes id_{det\mathscr{N}_\mr{Z/X}}}$$ The differential $\mr{d_{2}}$ of $$\mr{E}_{2}^{p,q}=\mr{H}^{p}(\mr{X},\ext^{q}(i_{*}\mathscr{L},i_{*}\mathscr{L})) \Rightarrow \mr{Ext}^{p+q}(i_{*}\mathscr{L},i_{*}\mathscr{L})$$ can be described as \begin{equation*} \mr{d}_{2}^{p,q} = \mr{R}^{p}\Gamma \tilde\delta_{q}(\mathscr{L}) : \mr{H}^{p}(\mr{Z},\wedge^{q}\mathscr{N}_\mr{Z/X}) \to \mr{H}^{p+2}(\mr{Z},\wedge^{q-1}\mathscr{N}_\mr{Z/X}).\end{equation*}
\end{proposition}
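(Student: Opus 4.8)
The plan is to pin down $\mr{d}_{2}$ as $\mr{R}^{p}\Gamma$ of the universal connecting class $\delta_{-q}$ of an explicit complex on $\mr{Z}$, and then to transport that class across the duality of \cref{rmkdecalage}. By that remark the local-to-global $\mr{Ext}$ spectral sequence is the one produced by $(\ref{eq:1})$ from the canonical filtration on $i_{*}\mathscr{F}$, where $\mathscr{F}\coloneqq \mr{R}\shom(\mathscr{L},i^{!}i_{*}\mathscr{L})=\mathscr{L}^{\vee}\otimes i^{!}i_{*}\mathscr{L}$. Since $i$ is a closed immersion, $i_{*}$ is exact and commutes with canonical truncation and with $\mr{R}\Gamma$, so cohomology sheaves, the classes $\delta$, and $\mr{R}\Gamma$ may all be computed on $\mr{Z}$. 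The class $\delta_{-q}(\mathscr{F})\in\mr{Ext}^{2}(\mathscr{H}^{q}(\mathscr{F}),\mathscr{H}^{q-1}(\mathscr{F}))$ has exactly the shape $\mathscr{H}^{q}(\mathscr{F})\to\mathscr{H}^{q-1}(\mathscr{F})[2]$ needed to produce, after $\mr{R}^{p}\Gamma$, a map of bidegree $(+2,-1)$; by the universal realisation of $\mr{d}_{2}$ recalled above it is the second differential, so $\mr{d}_{2}^{p,q}=\mr{R}^{p}\Gamma\,\delta_{-q}(\mathscr{F})$.

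It remains to rewrite $\delta_{-q}(\mathscr{F})$ through $i^{*}i_{*}\mathscr{L}$. For a locally complete intersection of codimension $c$, Grothendieck duality gives $i^{!}(-)\simeq i^{*}(-)\otimes\det\mathscr{N}_\mr{Z/X}[-c]$, hence an isomorphism in $\mr{D^{b}(Z)}$ \[ \mathscr{F}\simeq (i^{*}i_{*}\mathscr{L})\otimes(\mathscr{L}^{\vee}\otimes\det\mathscr{N}_\mr{Z/X})[-c]. \] Because the classes $\delta$ are defined from the canonical truncation triangles, they are natural with respect to isomorphisms in the derived category and behave transparently under the two operations on the right: tensoring by the line bundle $\mathscr{L}^{\vee}\otimes\det\mathscr{N}_\mr{Z/X}$ is exact and merely tensors the connecting maps with $\mr{id}$, while the shift $[-c]$ reindexes cohomology sheaves via $\mathscr{H}^{q}((i^{*}i_{*}\mathscr{L})[-c])=\mathscr{H}^{q-c}(i^{*}i_{*}\mathscr{L})$ and so carries $\delta_{-q}$ to $\delta_{-(q-c)}=\delta_{c-q}$. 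Combining, \[ \delta_{-q}(\mathscr{F})=\delta_{c-q}(i^{*}i_{*}\mathscr{L})\otimes\mr{id}_{\mathscr{L}^{\vee}}\otimes\mr{id}_{\det\mathscr{N}_\mr{Z/X}}=\tilde\delta_{q}(\mathscr{L}), \] and applying $\mr{R}^{p}\Gamma$ gives the proposition.

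For the bookkeeping I would check that the cohomology sheaves agree at each stage: from $\mathscr{H}^{-i}(i^{*}i_{*}\mathscr{L})\cong\mathscr{L}\otimes\wedge^{i}\Nb$ and the canonical isomorphism $\wedge^{c-q}\Nb\otimes\det\mathscr{N}_\mr{Z/X}\cong\wedge^{q}\mathscr{N}_\mr{Z/X}$, the source $\mathscr{H}^{q}(\mathscr{F})$ and target $\mathscr{H}^{q-1}(\mathscr{F})$ of $\delta_{-q}(\mathscr{F})$ become $\wedge^{q}\mathscr{N}_\mr{Z/X}$ and $\wedge^{q-1}\mathscr{N}_\mr{Z/X}$, matching the groups $\mr{H}^{p}(\mr{Z},\wedge^{q}\mathscr{N}_\mr{Z/X})$ and $\mr{H}^{p+2}(\mr{Z},\wedge^{q-1}\mathscr{N}_\mr{Z/X})$ appearing in the statement.

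I expect the main obstacle to be the duality step: one must use the correct lci form of Grothendieck--Verdier duality and confirm that, under the isomorphism $\mathscr{F}\simeq (i^{*}i_{*}\mathscr{L})\otimes(\mathscr{L}^{\vee}\otimes\det\mathscr{N}_\mr{Z/X})[-c]$, the source and target identifications of $\delta_{-q}(\mathscr{F})$ really are the natural wedge-power dualities rather than ad hoc choices, so that the induced map is literally $\tilde\delta_{q}(\mathscr{L})$ and not merely isomorphic to it. The remaining ingredients---exactness of $i_{*}$, naturality of $\delta$ under twists and shifts, and the décalage identification of \cref{rmkdecalage}---are formal.
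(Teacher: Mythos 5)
Your proposal is correct and takes essentially the same route as the paper: both rest on the filtered Grothendieck--Verdier isomorphism $\mr{R}\shom(i_{*}\mathscr{L},i_{*}\mathscr{L})\simeq i_{*}(\mathscr{L}^{\vee}\otimes i^{!}i_{*}\mathscr{L})$ together with the d\'ecalage realisation $\mr{d}_{2}^{p,q}=\mr{R}^{p}\Gamma\,\delta_{-q}$, followed by the identification $i^{!}i_{*}\mathscr{L}\simeq i^{*}i_{*}\mathscr{L}\otimes\det\mathscr{N}_{\mr{Z/X}}[-c]$ which turns $\delta_{-q}$ into $\tilde\delta_{q}(\mathscr{L})$. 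Your explicit accounting of how the $\delta$ classes transform under the line-bundle twist and the shift $[-c]$ (including the reindexing $\delta_{-q}\mapsto\delta_{c-q}$ and the identification $\wedge^{c-q}\mathscr{N}^{\vee}_{\mr{Z/X}}\otimes\det\mathscr{N}_{\mr{Z/X}}\cong\wedge^{q}\mathscr{N}_{\mr{Z/X}}$) just spells out what the paper compresses into its final ``canonical isomorphism in $\mr{D^{b}F(X)}$'' step.
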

\begin{proof}
 As already explained Grothendieck-Verdier duality gives an isomorphism in $\mr{D^{b}F(X)}$ between canonically filtered complexes $$\mr{R}\shom(i_{*}\mathscr{L},i_{*}\mathscr{L}) \simeq i_{*}\mr{R}\shom(\mathscr{L},i^{!}i_{*}\mathscr{L}).$$ By definition (see \cref{rmkdecalage}), $$ \mr{d}_{2}^{p,q} = \mr{R}^{p}\Gamma\delta_{-q}(\mr{R}\shom(i_{*}\mathscr{L},i_{*}\mathscr{L})),$$ hence we are done since there is a canonical isomorphism  $$i_{*}\mr{R}\shom(\mathscr{L},i^{!}i_{*}\mathscr{L}) \simeq i_{*}(\mathscr{L}^{\vee}\otimes i^{!}i_{*}\mathscr{L})$$ in $\mr{D^{b}F(X)}$.
\end{proof}
\begin{proposition}\label{lemmavanishing}
 Let $i : \mr{Z \xhookrightarrow{} X}$ be a locally complete intersection and consider any $\mathscr{F} \in \mr{Perf}(\mr{Z})$. Then $i_{*}\alpha_{\mathscr{F}}=0$. Furthermore, the morphism $$\mr{Ext}^{2}(\mathscr{F},\mathscr{F}\otimes \Nb) \to \mr{Ext}^{2}(i_{*}\mathscr{F},i_{*}(\mathscr{F}\otimes \Nb))$$ is injective iff $\alpha_{\mathscr{F}}=0$.
\end{proposition}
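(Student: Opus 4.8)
The plan is to reduce both assertions to the counit of the adjunction $i^{*}\dashv i_{*}$. Write $\mathscr{G}=\mathscr{F}\otimes\Nb$. Since $i^{*}$ is left adjoint to $i_{*}$, there is a natural isomorphism
$$\mr{Ext}^{2}(i_{*}\mathscr{F},i_{*}\mathscr{G})=\mr{Hom}_{\mr{D}(\mr X)}(i_{*}\mathscr{F},i_{*}\mathscr{G}[2])\cong\mr{Hom}_{\mr{D}(\mr Z)}(i^{*}i_{*}\mathscr{F},\mathscr{G}[2]),$$
and by naturality of the counit $\epsilon:i^{*}i_{*}\mathscr{F}\to\mathscr{F}$ the map $i_{*}$ on $\mr{Ext}^{2}$ is identified with precomposition $(-)\circ\epsilon$. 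Thus everything becomes a statement about the single morphism $\epsilon^{*}:\mr{Hom}(\mathscr{F},\mathscr{G}[2])\to\mr{Hom}(i^{*}i_{*}\mathscr{F},\mathscr{G}[2])$.

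For the vanishing $i_{*}\alpha_{\mathscr{F}}=0$ I would use that $\alpha_{\mathscr{F}}$ is, by definition, the connecting morphism of the triangle
$$\mathscr{F}\otimes\Nb[1]\to\tau^{\ge-1}i^{*}i_{*}\mathscr{F}\xrightarrow{\ \pi\ }\mathscr{F}\xrightarrow{\ \alpha_{\mathscr{F}}\ }\mathscr{F}\otimes\Nb[2],$$
so that $\alpha_{\mathscr{F}}\circ\pi=0$, consecutive arrows of a triangle composing to zero. The counit is the projection onto top cohomology, hence factors as $\epsilon=\pi\circ t$ through the canonical truncation $t:i^{*}i_{*}\mathscr{F}\to\tau^{\ge-1}i^{*}i_{*}\mathscr{F}$. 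Therefore $\alpha_{\mathscr{F}}\circ\epsilon=\alpha_{\mathscr{F}}\circ\pi\circ t=0$, which under the adjunction is exactly $i_{*}\alpha_{\mathscr{F}}=0$. (For a genuine perfect complex one replaces the truncation by the corresponding stage of the Atiyah/HKR construction of $\alpha_{\mathscr{F}}$; the formal point—that $\alpha_{\mathscr{F}}$ annihilates the map through which $\epsilon$ factors—is unchanged.)

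The \emph{only if} half of the second claim is then immediate: $\alpha_{\mathscr{F}}\in\ker(i_{*})$ by the previous step, so injectivity forces $\alpha_{\mathscr{F}}=0$. For the converse I would compute $\ker(\epsilon^{*})=\ker(t^{*}\circ\pi^{*})$ directly. Applying $\mr{Hom}(-,\mathscr{G}[2])$ to the defining triangle yields $\ker(\pi^{*})=\mr{im}(\alpha_{\mathscr{F}}^{*})=\mr{End}(\mathscr{G})\cdot\alpha_{\mathscr{F}}$, which is zero exactly when $\alpha_{\mathscr{F}}=0$. It then remains to see that $t^{*}$ is injective: the cone of $t$ is $(\tau^{\le-2}i^{*}i_{*}\mathscr{F})[1]$, and for $\mathscr{F}$ a sheaf the governing group $\mr{Hom}(\tau^{\le-2}i^{*}i_{*}\mathscr{F},\mathscr{G}[1])$ vanishes by the $t$-structure inequality $\mr{Hom}(\mr{D}^{\le-2},\mr{D}^{\ge-1})=0$. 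Since $t^{*}$ is injective we get $\ker(\epsilon^{*})=\ker(\pi^{*})=\mr{End}(\mathscr{G})\cdot\alpha_{\mathscr{F}}$, and injectivity holds iff $\alpha_{\mathscr{F}}=0$.

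The hard part is the injectivity direction for an arbitrary perfect complex $\mathscr{F}$, as opposed to a sheaf or line bundle. When $\mathscr{F}$ occupies several degrees the $t$-structure vanishing that made $t^{*}$ injective breaks down, since the deeper Koszul terms $\mathscr{F}\otimes\wedge^{\ge2}\Nb$ of $i^{*}i_{*}\mathscr{F}$ can admit nonzero maps to $\mathscr{F}\otimes\Nb[1]$. The route I would take is to exploit that $i^{*}i_{*}\mathscr{F}$ is always \emph{locally} split—the Koszul differential is multiplication by the ideal of $\mr Z$ and so acts as zero on an $\mathscr{O}_{\mr Z}$-module—so that the obstruction to these deeper terms contributing to $\ker(i_{*})$ is itself a global class which one must identify with a multiple of $\alpha_{\mathscr{F}}$; alternatively one reduces to the universal class $\alpha_{\Delta_{*}\mathscr{O}_\mr{Z}}$ on the diagonal and the structure-sheaf computation. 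Showing that the \emph{entire} kernel is the cyclic module generated by $\alpha_{\mathscr{F}}$, with no new contributions from higher codimension, is the main technical obstacle. For all the applications in \cref{mainthm,thm12,thm13}, where $\mathscr{F}$ is a line bundle, the sheaf argument above already suffices.
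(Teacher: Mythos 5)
Your reduction is, as far as one can tell, the intended one --- the paper states \cref{lemmavanishing} without any proof at all, so there is no in-text argument to compare against --- and everything you actually execute is correct: the identification of $i_{*}$ on $\mr{Ext}^{2}$ with precomposition by the counit $\epsilon$, the factorisation $\epsilon=\pi\circ t$ through $\tau^{\ge-1}i^{*}i_{*}\mathscr{F}$, the exactness argument giving $\ker(\pi^{*})=\mr{End}(\mathscr{F}\otimes\Nb)\cdot\alpha_{\mathscr{F}}$, and the $t$-structure vanishing $\mr{Hom}(\mr{D}^{\le-2},\mr{D}^{\ge-1})=0$ that makes $t^{*}$ injective when $\mathscr{F}$ is a sheaf. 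This settles the proposition for sheaves, hence for every invocation in the paper (\cref{obviouscase}, and through it \cref{thm13m}, apply it only to line bundles). Moreover, your first claim does not need the hand-wave in your parenthesis: run your factorisation on the universal kernel instead. The outer terms of the truncation triangle for $\tilde{i}^{*}\tilde{i}_{*}\Delta_{*}\mathscr{O}_{\mr{Z}}$ are the \emph{sheaves} $\Delta_{*}\Nb[1]$ and $\Delta_{*}\mathscr{O}_{\mr{Z}}$, so your sheaf argument gives $\alpha_{\Delta_{*}\mathscr{O}_{\mr{Z}}}\circ\tilde{\epsilon}=0$ on $\mr{Z\times Z}$, and applying the Fourier--Mukai evaluation $\Phi_{(-)}(\mathscr{F})$ --- which sends the universal counit to the counit by flat base change and the projection formula --- yields $\alpha_{\mathscr{F}}\circ\epsilon=0$, i.e.\ $i_{*}\alpha_{\mathscr{F}}=0$, for every $\mathscr{F}\in\mr{Perf}(\mr{Z})$. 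With that, the ``only if'' half is complete in full generality.

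The genuine gap is exactly where you locate it: the implication $\alpha_{\mathscr{F}}=0\Rightarrow$ injectivity for $\mathscr{F}$ spanning several degrees. There your text stops being a proof: $\mr{Hom}(\tau^{\le-2}i^{*}i_{*}\mathscr{F},\mathscr{F}\otimes\Nb[1])$ no longer vanishes --- already for $\mathscr{F}=\mathscr{A}\oplus\mathscr{B}[1]$ with $\mathscr{A},\mathscr{B}$ sheaves it receives contributions of the form $\mr{Hom}(\mathscr{A}\otimes\wedge^{2}\Nb,\mathscr{B}\otimes\Nb)$ --- so $\ker(\epsilon^{*})$ may strictly contain $\ker(\pi^{*})$, and your two suggested routes (local splitness of the Koszul differential, reduction to the diagonal) are programmes, not arguments. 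One warning about the most tempting repair: you cannot hope to split the whole counit when $\alpha_{\mathscr{F}}=0$, because, by the remark in the paper following the Arinkin--C\u{a}ld\u{a}raru proposition, formality of $i^{*}i_{*}\mathscr{F}$ requires $\alpha_{\mathscr{F}\otimes\Nb}=0$ as well; the hypothesis $\alpha_{\mathscr{F}}=0$ only splits the two-step piece, and $\ker(\epsilon^{*})$ must then be controlled against the remainder of the Postnikov tower of $i^{*}i_{*}\mathscr{F}$, which is precisely the unresolved step. In sum: complete and correct in the sheaf case, which is all the paper ever uses, with the perfect-complex case of the injectivity direction left open --- a gap you flag honestly but do not close.
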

\2
\begin{remark}\label{obviouscase}
 We observe that there are two obvious cases $\mathscr{L} = \mathscr{O}_\mr{Z}$ and $\mathscr{L} = \mr{det}\mathscr{N}_\mr{Z/X}$ in which the differential $\mr{d}_{2}^{p,c} : \mr{H}^{p}(\mr{Z},\wedge^{c}\mathscr{N}_\mr{Z/X}) \to \mr{H}^{p+2}(\mr{Z},\wedge^{c-1}\mathscr{N}_\mr{Z/X})$ vanishes. For the structure sheaf, it's enough to note that $\delta_{0}(i^{*}i_{*}\mathscr{O}_\mr{Z})=0$, while for $\mr{det}\mathscr{N}_\mr{Z/X}$, we use \cref{lemmavanishing}.
\end{remark}

\paragraph{Traces.}
We briefly review traces, mainly to fix notation. Suppose $\mathscr{F} \in \mr{Perf(X)}$ and let $\mathscr{V}$ be a vector bundle on $\mr{X}$. Then we define the sheaf trace $$\mr{Tr} : \mr{R}\shom(\mathscr{F},\mathscr{F}\otimes \mathscr{V}) \to \mathscr{V}$$ as the compostion of the natural isomorphism $\mr{R}\shom(\mathscr{F},\mathscr{F}\otimes \mathscr{V}) \simeq \mathscr{F}^{\vee}\otimes \mathscr{F}\otimes \mathscr{V}$ and the natural map $\mathscr{F}^{\vee}\otimes\mathscr{F}\to \mathscr{O}_\mr{X}$. Applying $\mr{R\Gamma}$, and taking cohomology, we get the cohomological trace map of Illusie (see \cite{MR0491680}): $$\mr{Tr} : \mr{Ext}^{k}(\mathscr{F},\mathscr{F}\otimes\mathscr{V}) \to \mr{H}^{k}(\mr{X},\mathscr{V}).$$ If $\mathscr{F}, \mathscr{G} \in \mr{Perf(X)}$, we also have a partial trace: $$\mr{Tr}_{\mathscr{F}} : \mr{Ext}^{k}(\mathscr{F}\otimes \mathscr{G},\mathscr{F}\otimes \mathscr{G}\otimes\mathscr{V}) \to \mr{Ext}^{k}(\mathscr{G},\mathscr{G}\otimes\mathscr{V}).$$   is additive in the following sense: let $\mathscr{F_{1}}\to \mathscr{F}_{2} \to \mathscr{F}_{3} \to \mathscr{F}_{1}[1]$ be an exact triangle, and suppose $\alpha_{i} \in \mr{Ext}^{k}(\mathscr{F}_{i},\mathscr{F}_{i}\otimes\mathscr{V})$ induce a morphism of exact triangles (notice $\mathscr{V}$ is flat so tensoring with $\mathscr{V}$ is exact), then $\mr{Tr\alpha_{1} -Tr\alpha_{2} + Tr\alpha_{3}}=0$. It is also multiplicative: if $\alpha \in \mr{Ext}^{k}(\mathscr{F},\mathscr{F}\otimes\mathscr{V})$ and $\beta \in \mr{Ext}^{j}(\mathscr{V},\mathscr{E})$, for a vector bundle $\mathscr{E}$, then $\mr{Tr((id}\otimes \beta)\circ \alpha) =\beta \circ \mr{Tr\alpha}$ in $\mr{H}^{i+j}(\mr{X},\mathscr{E})$. Applying this to the case of an obstruction class $\alpha_{\mathscr{V}}$, we get \begin{equation}\label{eq:4}\mr{Tr\alpha_{\mathscr{V}} = Tr((id\otimes KS)\circ At\mathscr{V}) = KS \circ Tr(At\mathscr{V}) = \alpha_{det\mathscr{V}}}.\end{equation} 

\section{Applications}\label{sec2}

\paragraph{The absolute case.}We begin with the definition of a hyperkähler variety in the algebraic setting. Afterwards, we consider the compatibility of the local-to-global spectral sequence with Serre duality and give a brief reminder on Lefschetz structures, before going into our main results.\2
\begin{definition}\label{algkahler}
 A smooth, proper, connected scheme $\mr{X}/k$ is a hyperkähler variety (over $k$) if $\mr{dim}(\mr{H}^{0}(\mr{X},\Omega^{2}_\mr{X}))=1$, generated by a non-degenerate form $\sigma$ (a symplectic form), $\pi^{\text{ét}}(\mr{X})=1$, and $\mr{K_{X}} \cong \mathscr{O}_\mr{X}$.
\end{definition}

$\hspace*{5mm}$A subvariety $\mr{L}$ of $\mr{X}$ is called Lagrangian if $\left.\sigma\right|_{\mr{L}}=0$ and $\mr{2dimL = dimX}$. If $i:\mr{ L \xhookrightarrow{} X}$ is a smooth Lagrangian we have $\mathscr{T}_\mr{X} \cong \Omega^{1}_\mr{X}$ via the symplectic form, hence $i^{*}\mathscr{T}_\mr{X} \cong i^{*}\mr{\Omega^{1}_{X}}$. There is a commutative diagram: 
\[
\begin{tikzcd}
{}
0\arrow{r}
&\mathscr{T}_\mr{L}\arrow{r}\arrow{d}
&i^{*}\mathscr{T}_\mr{X}\arrow{r}\arrow{d}
&\mathscr{N}_\mr{L/X}\arrow{r}\arrow{d}
&0\\
0\arrow{r}
&\mathscr{N}^{\vee}_\mr{L/X}\arrow{r}
&i^{*}\mr{\Omega^{1}_{X}}\arrow{r}
&\mr{\Omega^{1}_{L}}\arrow{r}
&0
\end{tikzcd}
\]
which shows we have isomorphisms $\Omega^{q}_\mr{L} \cong \wedge^{q}\mr{\mathscr{N}_{L/X}}$. Hence the second page in the Lagrangian case is $\mr{E}_{2}^{p,q} = \mr{H}^{p}(\mr{L},\Omega^{q}_\mr{L})$ and, using the results of previous sections, we have differentials $$\mr{R}^{p}\Gamma\tilde\delta_{1}(\mathscr{L})=\mr{d}_{2}^{p,1} \mr{:H}^{p}(\mr{L},\Omega^{1}_\mr{L}) \to \mr{H}^{p+2}(\mr{L},\mathscr{O}_\mr{L}),$$  $$\mr{R}^{p}\Gamma \tilde\delta_{2}(\mathscr{L})=\mr{d}_{2}^{p,2} :\mr{H}^{p}(\mr{L},\Omega^{2}_\mr{L}) \to \mr{H}^{p+2}(\mr{L},\Omega^{1}_\mr{L}).$$ 
Serre duality asserts that the pairing $$\mr{H}^{p}(\mr{L},\Omega^{q}_\mr{L}) \otimes \mr{H}^{n-p}(\mr{L},\Omega^{n-q}_\mr{L}) \to \mr{H}^{n}(\mr{L},\mr{K}_\mr{L})$$ is perfect, where we set $n=\mr{dim L = codim(L,X)}$ and $\mr{K}_\mr{L}$ denotes the canonical bundle of $\mr{L}$. Since $\mr{d_{2}}$ kills $\mr{H}^{n}\mr{(L,K_{L})}$, and the higher differentials kill the appropriate subquotients, and $\mr{d}_{2}$ is multiplicative, we see that the diagram
\[ 
 \begin{tikzcd}
 \mr{H}^{p}(\mr{L},\Omega^{q}_\mr{L})\otimes \mr{H}^{n-p}(\mr{L},\Omega^{n-q}_\mr{L})\arrow[yshift=.85ex, rr, "\mr{d}_{2}\otimes \mr{id}"] \arrow[dr, ""]
 && \mr{H}^{p+2}(\mr{L},\Omega^{q-1}_\mr{L})\otimes \mr{H}^{n-p-2}(\mr{L},\Omega^{n-q+1}_\mr{L})\arrow[yshift=-.45ex, ll,"\mr{id}\otimes \mr{d}_{2}"] \arrow[dl, ""] \\
  &\C
 \end{tikzcd}
 \]
commutes up to sign, hence the following diagram, and its variants for the higher differentials, commutes up to sign, i.e. the differentials are compatible with Serre duality:
 \[ 
 \begin{tikzcd}
 \mr{H}^{p}(\mr{L},\Omega^{q}_\mr{L})\arrow[r, "\mr{d_{2}}"] \arrow[d, ""]
 & \mr{H}^{p+2}(\mr{L},\Omega^{q-1}_\mr{L}) \arrow[d, ""] \\
 \mr{H}^{n-p}(\mr{L},\Omega^{n-q}_\mr{L})^{\vee} \arrow[r, "\mr{d_{2}^{\vee}}"]
 & \mr{H}^{n-p-2}(\mr{L},\Omega^{n-q+1}_\mr{L})^{\vee}.
 \end{tikzcd}
 \]
 \begin{definition}
 A graded Lefschetz structure, in an abelian category $\mathscr{A}$, is a pair $(\mr{H},\mr{L})$, where $\mr{H} \in \mathscr{A}$ is a graded object and $\mr{L}$ is a nilpotent endomorphism of $\mr{H}$, such that $\mr{L}(\mr{H}^{i}) \subset \mr{H}^{i+2}$ and $\mr{L}^{i} : \mr{H}^{-i} \xrightarrow{\sim} \mr{H}^{i}$.
 \end{definition}\2
 \begin{example}
  Let $\mr{X}$ be a smooth projective variety of dimension $n$ with an ample line bundle $\mathscr{L}$. Then, if $\omega$ is the first Chern class of $\mathscr{L}$ and $\mr{L}_{\omega} \coloneqq \omega\, \cup : \mr{H}^{n-*}(X/k) \to \mr{H}^{n-*}(X/k)$, the pair $(\mr{H}^{n-*}(X/k),\mr{L}_{\omega})$ is a graded Lefschetz structure (in fact, it is a $(-)$Hodge-Lefschetz structure - an example of a mixed Hodge structure).
 \end{example}
\2
\begin{remark}
 We shall abuse terminology, calling the first Chern class of an ample line bundle a Kähler form.
\end{remark}
\2
\begin{theorem}\label{thm}
  Let $\mr{X}/k$ be a (projective) hyperkähler variety, let $i : \rm L \xhookrightarrow{} X$ be a smooth Lagrangian, denote the Kähler form on $\mr{L}$ by $\omega \in \mr{H^{1}(L,\Omega^{1}_{L})}$, and suppose $\mathscr{L}$ is a line bundle on $\mr{L}$. Then the local-to-global $\mr{Ext}$ spectral sequence  $$\mr{E}_{2}^{p,q}= \mr{H}^{p}(\mr{L},\Omega^{q}_\mr{L}) \Rightarrow \mr{Ext}^{p+q}(i_{*}\mathscr{L},i_{*}\mathscr{L})$$ degenerates (multiplicatively) on the second page if and only if $\rm d_{2}(\omega)=0$.
\end{theorem}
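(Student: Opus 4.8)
The \emph{only if} direction is trivial: degeneration at $\mr{E}_2$ forces every $\mr{d}_r$ with $r\ge 2$ to vanish, in particular $\mr{d}_2(\omega)=0$. For the converse I would argue by induction on $r$ that $\mr{d}_r=0$, under the standing assumption $\mr{d}_2(\omega)=0$. Since $\mr{L}$ is smooth and projective over $k$, Hodge-to-de Rham degeneration identifies $\bigoplus_{p,q}\mr{E}_2^{p,q}=\bigoplus_{p,q}\mr{H}^p(\mr{L},\Omega^q_\mr{L})$ with $\mr{H}^{\ast}(\mr{L}/k)$, and Hard Lefschetz makes $(\mr{H}^{n-\ast}(\mr{L}/k),\mr{L}_\omega)$ a graded Lefschetz structure, where $n=\dim\mr{L}$ and $\mr{L}_\omega=\omega\,\cup$. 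Assume inductively that $\mr{d}_2=\dots=\mr{d}_{r-1}=0$, so that $\mr{E}_r=\mr{E}_2$ as a bigraded algebra and $\omega$ survives as a class in $\mr{E}_r^{1,1}$.

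First I would reduce to a statement about $\mr{L}_\omega$. As the local-to-global spectral sequence is multiplicative, $\mr{d}_r$ is a derivation for the induced product, so $[\mr{d}_r,\mr{L}_\omega]$ is cup product with $\mr{d}_r(\omega)$; hence it suffices that $\mr{d}_r(\omega)=0$. Now $\mr{d}_r(\omega)\in\mr{E}_r^{r+1,\,2-r}=\mr{H}^{r+1}(\mr{L},\Omega^{2-r}_\mr{L})$, which vanishes for $r\ge 3$ because $\Omega^{2-r}_\mr{L}=0$, while for $r=2$ it is exactly the hypothesis $\mr{d}_2(\omega)\in\mr{H}^3(\mr{L},\mathscr{O}_\mr{L})$ being zero. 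Thus in every case $[\mr{d}_r,\mr{L}_\omega]=0$; this is the promised equivalence between $\mr{d}_2(\omega)=0$ and commutation with the Lefschetz operator, and the genuine input enters only at $r=2$. What remains is pure linear algebra: an endomorphism of the Hodge--Lefschetz structure of total degree $+1$, commuting with $\mr{L}_\omega$, self-adjoint for Serre duality (by the compatibility diagram preceding the theorem) and with $\mr{d}_r^2=0$, must vanish.

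To prove this I would pass to the primitive decomposition $\mr{H}^{\ast}=\bigoplus_{k,j}\mr{L}_\omega^{\,j}\mr{P}^{k}$. A direct computation with $\mr{L}_\omega^{\,n-k+1}|_{\mr{P}^k}=0$ shows that commutation with $\mr{L}_\omega$ confines the differential to the bottom two steps of each Lefschetz string, $\mr{d}_r(\mr{P}^{k})\subseteq\mr{P}^{k+1}\oplus\mr{L}_\omega\mr{P}^{k-1}$; write $\mr{d}_r|_{\mr{P}^k}=\mr{d}_r^{+}+\mr{L}_\omega\mr{d}_r^{-}$ with $\mr{d}_r^{+}\colon\mr{P}^k\to\mr{P}^{k+1}$ and $\mr{d}_r^{-}\colon\mr{P}^k\to\mr{P}^{k-1}$. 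This is precisely where Deligne's observation fails verbatim: it concerns degree $-1$ maps, and for such a map the class $\mr{L}_\omega^{\,n-k+1}$ acts injectively on the target, forcing vanishing; for our degree $+1$ differential the relevant kernel is two steps wide instead of zero. The idea is that Deligne's argument does apply to the degree-lowering component, realised as an honest degree $-1$, $\mr{L}_\omega$-commuting operator assembled from $\mr{d}_r^{-}$; this shows $\mr{d}_r$ preserves primitive cohomology and, since $\mr{P}^{n+1}=0$, kills middle primitive cohomology, $\mr{d}_r|_{\mr{P}^n}=0$.

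Finally I would propagate this vanishing downward. Serre self-adjointness identifies $\mr{d}_r^{+}\colon\mr{P}^{k}\to\mr{P}^{k+1}$ with the adjoint of $\mr{d}_r^{-}\colon\mr{P}^{k+1}\to\mr{P}^{k}$ relative to the primitive polarisations, while $\mr{d}_r^2=0$ yields $(\mr{d}_r^{+})^2=(\mr{d}_r^{-})^2=0$ together with $\mr{d}_r^{+}\mr{d}_r^{-}+\mr{d}_r^{-}\mr{d}_r^{+}=0$; running a downward induction on $k$ from the base case $\mr{d}_r|_{\mr{P}^n}=0$ through these relations and the adjunction forces $\mr{d}_r|_{\mr{P}^k}=0$ for all $k$, and since the primitives generate $\mr{H}^{\ast}$ under $\mr{L}_\omega$ and $\mr{d}_r$ commutes with $\mr{L}_\omega$, this gives $\mr{d}_r=0$ and closes the induction on $r$. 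The main obstacle is the middle step: because the differential \emph{raises} total degree, commutation with a single Lefschetz operator does not by itself kill it, and the crux is to carve out the primitive-lowering part as a bona fide degree $-1$ operator to which Deligne's vanishing applies --- with the added constraint, in order to stay over an arbitrary $k$, of closing the argument through Serre self-adjointness and $\mr{d}_r^2=0$ rather than through any Hodge-theoretic positivity, which is unavailable outside $\C$.
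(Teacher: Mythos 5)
Your proposal is correct and takes essentially the same route as the paper: derive $[\mr{d}_r,\mr{L}_\omega]=0$ from $\mr{d}_2(\omega)=0$ (the paper, like you, only needs genuine input at $r=2$), decompose $\mr{d}_r$ on primitives into $\mr{d}_r^{+}+\mr{L}_\omega \mr{d}_r^{-}$, kill $\mr{d}_r^{-}$ by Deligne's degree $-1$ observation, and propagate the vanishing through total degrees using the non-degenerate primitive pairing $\gamma\otimes\beta\mapsto\int_{\mr{L}}\omega^{n-p-q-1}\cup\gamma\cup\beta$, all inside an outer induction on $r$. The only cosmetic difference is the last step: the paper implements your Serre-adjointness of $\mr{d}_r^{+}$ and $\mr{d}_r^{-}$ concretely via the Leibniz rule and the vanishing $\mr{L}_\omega^{n-p-q-1}(\alpha\cup\beta)=0$ (which holds since this class lies in $\mr{H}^{*}(\mr{L},\Omega^{n+1}_{\mr{L}})=0$), and in fact your adjointness observation shows that $\mr{d}_r^{-}=0$ already forces $\mr{d}_r^{+}=0$ everywhere at once, so your auxiliary relations from $\mr{d}_r^{2}=0$ and the downward induction on $k$ are harmless but redundant.
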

\begin{proof}
 The only if part is trivial. Suppose $\mr{d_{2}(\omega)=0}$. Then for all $\mr{r\ge 2}$, $\mr{d}_{r}(\omega)=0$ and we get 
 $$\mr{d}_{r}(\mr{L}_{\omega}\alpha) = \mr{d}_{r}(\omega)\cup \alpha + \omega \cup \mr{d}_{r}(\alpha) = \mr{L}_{\omega}\mr{d}_{r}(\alpha),$$ where $\mr{L}_{\omega}$ is the Lefschetz operator associated to $\omega$, i.e. the differentials commute with the Lefschetz operator. Let $\mr{codim(L,X)}=n$, and consider the diagram:
  \[ 
 \begin{tikzcd}
 \mr{H}^{p}(\mr{L},\Omega^{q}_\mr{L})\arrow[r, "\mr{d_{2}}"] \arrow[d, "\mr{L}_{\omega}^{n-p-q+1}"]
 & \mr{H}^{p+2}(\mr{L},\Omega^{q-1}_\mr{L}) \arrow[d, "\mr{L}_{\omega}^{n-p-q+1}"] \\
 \mr{H}^{n-q+1}(\mr{L},\Omega^{n-p+1}_\mr{L}) \arrow[r, "\mr{d_{2}}"]
 & \mr{H}^{n-q+3}(\mr{L},\Omega^{n-p}_\mr{L}).
 \end{tikzcd}
 \]
 Restricting to the primitive cohomology $\mr{H}_{0}^{p}(\mr{L},\Omega_\mr{L}^{q})$, we see that $$\mr{d}_{2}(\mr{H}_{0}^{p}(\mr{L},\Omega_\mr{L}^{q})) \subset \mr{ker}(\left.\mr{L}^{n-p-q+1}_{\omega}\right|_{\mr{H}^{p+2}(\mr{L},\Omega_\mr{L}^{q-1})}) = \mr{H}_{0}^{p+2}(\mr{L},\Omega_\mr{L}^{q-1})\oplus \mr{L}_{\omega}\mr{H}_{0}^{p+1}(\mr{L},\Omega_\mr{L}^{q-2}).$$ So we can write $\mr{d_{2} = d_{2}^{o} + L_{\omega}d_{2}^{'}}$. Deligne's argument is essentially the diagram:
  \[ 
 \begin{tikzcd}
 \mr{H}_{0}^{p}(\mr{L},\Omega^{q}_\mr{L})\arrow[r, "\mr{d^{'}_{2}}"] \arrow[d, "\mr{L}_{\omega}^{n-p-q+1}"]
 & \mr{H}^{p+1}(\mr{L},\Omega^{q-2}_\mr{L}) \arrow[d, "\mr{L}_{\omega}^{n-p-q+1}"] \\
 \mr{H}^{n-q+1}(\mr{L},\Omega^{n-p+1}_\mr{L}) \arrow[r, "\mr{d^{'}_{2}}"]
 & \mr{H}^{n-q+2}(\mr{L},\Omega^{n-p-1}_\mr{L}).
 \end{tikzcd}
 \]
 The left vertical arrow is $0$, while the right one is injective, hence $\mr{d_{2}^{'}}=0$. This means that $\mr{d_{2}}$ preserves primitive cohomology and $\mr{d}_{2}^{p,q}$ vanishes for all $p+q=n$. Let us assume that $\rm d_{2}$ vanishes for $ p+q=k+1$, base case being $p+q=n$. Suppose $\alpha \in \mr{H}_{0}^{p}(\mr{L},\Omega^{q}_\mr{L})$ with $p+q=k$. Then for any $\beta \in \mr{H}_{0}^{q-1}(\mr{L},\Omega^{p+2}_\mr{L})$, we have $\mr{L}^{n-p-q-1}_{\omega}(\alpha \cup \beta)=0$, so 
 \begin{equation*}
 \begin{split}
   0=\mr{d}_{2}(\mr{L}^{n-p-q-1}_{\omega}(\alpha \cup \beta)) &= \mr{L}^{n-p-q-1}_{\omega}(\mr{d}_{2}(\alpha) \cup \beta + (-1)^{\mr{deg}\alpha}\alpha\cup \mr{d}_{2}(\beta))\\
   &= \mr{L}^{n-p-q-1}_{\omega}(\mr{d}_{2}(\alpha) \cup \beta).
 \end{split}
 \end{equation*}
We note that the pairing $$\mr{H}_{0}^{p+2}(\mr{L},\Omega^{q-1}_\mr{L}) \otimes \mr{H}_{0}^{q-1}(\mr{L},\Omega^{p+2}_\mr{L}) \to k : \gamma \otimes \beta \mapsto \int_{\mr{L}} \omega^{n-p-q-1}\cup \gamma\cup \beta$$ is non-degenerate. Therefore $\mr{d_{2}(\alpha)}=0$ and by induction we conclude that $\mr{d_{2}=0}$. Assuming by induction $\mr{d}_{r}=0$, we run the same procedure to $\mr{d}_{r+1}$ to complete the induction, hence concluding that for all $r\ge 2$, $\mr{d}_{r}=0$, and the spectral sequence collapses on the second page.
\end{proof}
\begin{theorem}\label{cor}
 Let $\mr{X}/k$ be a (projective) hyperkähler variety, $i : \mr{L \xhookrightarrow{} X}$ a smooth Lagrangian, and let $\mathscr{L}$ be a line bundle on $\mr{L}$, extending to the first infinitesimal neighbourhood of $\mr{L}$ in $\mr{X}$, such as $\mathscr{O}_{\mr{L}}$. Then the local-to-global $\mr{Ext}$ spectral sequence  $$\mr{E}_{2}^{p,q}= \mr{H}^{p}(\mr{L},\Omega^{q}_\mr{L}) \Rightarrow \mr{Ext}^{p+q}(i_{*}\mathscr{L},i_{*}\mathscr{L})$$
 degenerates on the second page. Hence $\mr{H}^{*}(\mr{L}/k) = \oplus_{p,q}\mr{H}^{p}(\mr{L},\Omega^{q}_\mr{L}) = \mr{Ext}(i_{*}\mathscr{L},i_{*}\mathscr{L})$.
\end{theorem}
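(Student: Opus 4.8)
The plan is to reduce the statement to \cref{thm}, which identifies degeneration with the single condition $\mr{d}_2(\omega)=0$; thus it suffices to establish that vanishing under the hypothesis that $\mathscr{L}$ extends over the first infinitesimal neighbourhood $\mr{L}^{(1)}$ of $\mr{L}$ in $\mr{X}$.

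First I would locate $\mr{d}_2(\omega)$ in the spectral sequence. As $\omega \in \mr{H}^1(\mr{L},\Omega^1_\mr{L}) = \mr{E}_2^{1,1}$, we have $\mr{d}_2(\omega) = \mr{d}_2^{1,1}(\omega) \in \mr{H}^3(\mr{L},\mathscr{O}_\mr{L})$, so $\mr{d}_2(\omega)$ sits in the row $\mr{d}_2^{\bullet,1}\colon \mr{H}^\bullet(\mr{L},\mathscr{N}_\mr{L/X}) \to \mr{H}^{\bullet+2}(\mr{L},\mathscr{O}_\mr{L})$. Rather than attack this row directly, I would first treat the top row $\mr{d}_2^{\bullet,n}$, where $n=\dim\mr{L}=\mr{codim}(\mr{L},\mr{X})$. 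By the formula $\mr{d}_2^{p,q}=\mr{R}^p\Gamma\,\tilde\delta_q(\mathscr{L})$ together with the identity $\delta_0(i^*i_*\mathscr{L})=\alpha_\mathscr{L}$, the top row is $\mr{R}^\bullet\Gamma$ of the deformation-obstruction class $\alpha_\mathscr{L}\in\mr{Ext}^2(\mathscr{L},\mathscr{L}\otimes\mathscr{N}^{\vee}_\mr{L/X})$ of \cref{hkr} (twisted by identities). By \cref{thm:th2} this class is exactly the obstruction to extending $\mathscr{L}$ over $\mr{L}^{(1)}$; since $\mathscr{L}$ so extends by hypothesis, $\alpha_\mathscr{L}=0$, and hence the entire top row vanishes, $\mr{d}_2^{\bullet,n}=0$. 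This is the mechanism behind \cref{obviouscase} for $\mathscr{O}_\mr{L}$, now available for any $\mathscr{L}$ that lifts.

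Next I would transport this vanishing to the row containing $\mr{d}_2(\omega)$ via the compatibility of the differentials with Serre duality recorded just before \cref{thm}. Under that pairing $\mr{d}_2^{p,q}$ is dual to $\mr{d}_2^{n-p-2,\,n-q+1}$; taking $(p,q)=(1,1)$ shows $\mr{d}_2^{1,1}$ is dual to the top-row differential $\mr{d}_2^{n-3,\,n}$, which we have just shown is zero (the degenerate cases $n<3$ being trivial, as the target $\mr{H}^3(\mr{L},\mathscr{O}_\mr{L})$ then vanishes). Therefore $\mr{d}_2^{1,1}=0$, so in particular $\mr{d}_2(\omega)=0$, and \cref{thm} gives multiplicative degeneration on the second page.

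Finally, degeneration yields $\mr{Ext}^m(i_*\mathscr{L},i_*\mathscr{L})=\bigoplus_{p+q=m}\mr{E}_2^{p,q}=\bigoplus_{p+q=m}\mr{H}^p(\mr{L},\Omega^q_\mr{L})$; summing over $m$ and invoking the Hodge decomposition $\mr{H}^*(\mr{L}/k)=\bigoplus_{p,q}\mr{H}^p(\mr{L},\Omega^q_\mr{L})$, valid since $\mr{L}$ is smooth and projective over a field of characteristic $0$, gives the stated identity. I expect the one genuinely delicate ingredient to be the opening identification --- that the extremal differential is controlled precisely by $\alpha_\mathscr{L}$ --- which rests on the décalage and Grothendieck-Verdier computations of \cref{sec1}; once that and the Serre-duality compatibility are granted, the appeal to \cref{thm} is immediate.
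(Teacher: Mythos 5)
Your proposal is correct and matches the paper's own proof essentially step for step: reduce via \cref{thm} to showing $\mathrm{d}_2(\omega)=0$, identify the top-row differential $\mathrm{d}_2^{p,n}$ as $\mathrm{R}^p\Gamma$ of a twist of $\alpha_{\mathscr{L}}$, kill it using the hypothesis that $\mathscr{L}$ lifts to the first infinitesimal neighbourhood, and transport the vanishing to $\mathrm{d}_2^{1,1}$ by the Serre-duality compatibility of the differentials. The one slip is a mis-citation: the statement that $\alpha_{\mathscr{L}}$ is precisely the obstruction to extending $\mathscr{L}$ over the first-order thickening is \cref{thm:th} (Huybrechts--Thomas, Grivaux), not \cref{thm:th2} (Arinkin--C\u{a}ld\u{a}raru), which concerns extensions with a prescribed module structure.
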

\begin{proof}
 It would suffice to show that $\mr{d_{2}}(\omega)=0$. By Serre duality, $\mr{d}_{2}^{p,1}=0$ iff $\mr{d}_{2}^{n-p-2,n}=0$.
 We have already seen that $\mr{d}_{2}^{p,n}=\mr{R}^{p}\Gamma(\alpha_{\mathscr{L}}\otimes \mr{id_{\mathscr{L}^{\vee}}\otimes id_{K_{L}}})$. The class $\alpha_{\mathscr{L}}$ is the obstruction to extending $\mathscr{L}$ to a line bundle on the first infinitesimal neighbourhood, so vanishes iff $\mathscr{L}$ extends and hence $\mr{d_{2}}(\omega)=0$.
\end{proof}
\begin{theorem}\label{thm13m}
 Let $i : \rm L \xhookrightarrow{} X$ be as above and let $\mathscr{K}$ be any (existing) rational power of the canonical bundle of $\rm L$. Then the local-to-global $\mr{Ext}$ spectral sequence  $$\mr{E}_{2}^{p,q}= \mr{H}^{p}(\mr{L},\Omega^{q}_\mr{L}) \Rightarrow \mathrm{Ext}^{p+q}(i_{*}\mathscr{K},i_{*}\mathscr{K})$$ degenerates (multiplicatively) on the second page.
\end{theorem}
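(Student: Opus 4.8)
The plan is to reduce everything, via \autoref{thm}, to the single equation $\mr{d}_{2}(\omega)=0$, and then to exploit the fact that the relevant second differential depends on the line bundle $\mathscr{K}$ only through the trace of its obstruction class, which scales linearly with the rational exponent. First I would apply \autoref{thm}: since $\mr{X}$ and $\mr{L}$ are as in its hypotheses, the spectral sequence attached to $\mathscr{K}$ degenerates multiplicatively as soon as $\mr{d}_{2}(\omega)=0$, where $\omega\in\mr{H}^{1}(\mr{L},\Omega^{1}_\mr{L})$. Now $\mr{d}_{2}(\omega)=\mr{d}_{2}^{1,1}(\omega)\in\mr{H}^{3}(\mr{L},\mathscr{O}_\mr{L})$, so by the Serre-duality compatibility of the differentials established above it is enough to prove that the dual differential $\mr{d}_{2}^{n-3,n}$ vanishes; in fact I will show $\mr{d}_{2}^{p,n}=0$ for all $p$. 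By the description of $\mr{d}_{2}$ we have $\mr{d}_{2}^{p,n}=\mr{R}^{p}\Gamma\,\tilde\delta_{n}(\mathscr{K})$ with $\tilde\delta_{n}(\mathscr{K})=\delta_{0}(i^{*}i_{*}\mathscr{K})\otimes\mr{id}_{\mathscr{K}^{\vee}}\otimes\mr{id}_{\det\mathscr{N}_\mr{L/X}}$, and $\delta_{0}(i^{*}i_{*}\mathscr{K})=\alpha_{\mathscr{K}}$ because $i^{*}i_{*}\mathscr{K}$ is concentrated in non-positive degrees.

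The key observation is that $\mr{d}_{2}^{p,n}$ sees $\mathscr{K}$ only through $\mr{Tr}(\alpha_{\mathscr{K}})\in\mr{H}^{2}(\mr{L},\mathscr{N}^{\vee}_\mr{L/X})$. Indeed, for a line bundle the contraction $\alpha_{\mathscr{K}}\otimes\mr{id}_{\mathscr{K}^{\vee}}:\mathscr{K}\otimes\mathscr{K}^{\vee}\to\mathscr{K}\otimes\mathscr{K}^{\vee}\otimes\mathscr{N}^{\vee}_\mr{L/X}[2]$ is exactly $\mr{Tr}(\alpha_{\mathscr{K}})$, so that $\tilde\delta_{n}(\mathscr{K})$ is $\mr{Tr}(\alpha_{\mathscr{K}})\otimes\mr{id}_{\det\mathscr{N}_\mr{L/X}}$ followed by the fixed contraction $\mathscr{N}^{\vee}_\mr{L/X}\otimes\det\mathscr{N}_\mr{L/X}\cong\wedge^{n-1}\mathscr{N}_\mr{L/X}$. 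This recipe is the same for every line bundle and is linear in its trace argument. Using the factorisation $\alpha_{\mathscr{K}}=(\mr{id}\otimes\mr{KS})\circ\mr{At}(\mathscr{K})$ and the multiplicativity of the trace $(\ref{eq:4})$, I would then identify $\mr{Tr}(\alpha_{\mathscr{K}})=\mr{KS}\circ\mr{Tr}(\mr{At}(\mathscr{K}))=\mr{KS}\circ c_{1}(\mathscr{K})$, which is $\Q$-linear in $c_{1}(\mathscr{K})\in\mr{H}^{1}(\mr{L},\Omega^{1}_\mr{L})$.

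It remains to feed in that $\mathscr{K}$ is a rational power of $\mr{K}_\mr{L}$. Writing $\mathscr{K}^{\otimes m}\cong\mr{K}_\mr{L}^{\otimes a}$ and passing to first Chern classes in the torsion-free vector space $\mr{H}^{1}(\mr{L},\Omega^{1}_\mr{L})$ forces $c_{1}(\mathscr{K})=\tfrac{a}{m}\,c_{1}(\mr{K}_\mr{L})$, hence $\mr{Tr}(\alpha_{\mathscr{K}})=\tfrac{a}{m}\,\mr{Tr}(\alpha_{\mr{K}_\mr{L}})$. Since $\mr{K}_\mr{L}\cong\det\mathscr{N}_\mr{L/X}$, \cref{obviouscase} tells us that $\mr{d}_{2}^{p,n}=0$ for $\mathscr{L}=\mr{K}_\mr{L}$, i.e. the linear recipe above applied to $\mr{Tr}(\alpha_{\mr{K}_\mr{L}})$ is the zero map. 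By the linearity of the recipe, its value on $\mr{Tr}(\alpha_{\mathscr{K}})=\tfrac{a}{m}\,\mr{Tr}(\alpha_{\mr{K}_\mr{L}})$ is $\mr{d}_{2}^{p,n}(\mathscr{K})=\tfrac{a}{m}\,\mr{d}_{2}^{p,n}(\mr{K}_\mr{L})=0$. Therefore $\mr{d}_{2}(\omega)=0$, and \autoref{thm} delivers multiplicative degeneration on the second page.

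The step I expect to be the main obstacle is the middle one: making rigorous that $\mr{d}_{2}^{p,n}$ factors through $\mr{Tr}(\alpha_{\mathscr{K}})$ and that this trace equals $\mr{KS}\circ c_{1}(\mathscr{K})$, cleanly enough that the scaling by $a/m$ is literally legitimate. Concretely, one must check that the contraction and the $\det\mathscr{N}_\mr{L/X}$-twist entering $\tilde\delta_{n}$ are independent of $\mathscr{K}$, so that the only dependence on $\mathscr{K}$ is the manifestly $\Q$-linear one through its Atiyah/Chern class; once this is secured, the reduction to the already-settled case $\mathscr{L}=\det\mathscr{N}_\mr{L/X}$ is immediate.
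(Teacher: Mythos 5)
Your proposal is correct and follows essentially the same route as the paper: the paper likewise reduces to $\mathscr{K}=\mr{K}_\mr{L}$ via the linearity $\alpha_{\mathscr{K}}=\mr{KS}\cup c_{1}(\mathscr{K})$ (your trace computation $\mr{Tr}(\alpha_{\mathscr{K}})=\mr{KS}\circ c_{1}(\mathscr{K})$ via $(\ref{eq:4})$ is just an unpacking of this, since for a line bundle the trace is the identity), giving $\tilde\delta_{n}(\mathscr{K})=(s/t)\,\tilde\delta_{n}(\mr{K}_\mr{L})$, then uses Serre duality to reduce $\mr{d}_{2}(\omega)=0$ to $\mr{d}_{2}^{p,n}=0$, settles the latter by \cref{obviouscase} applied to $\mr{K}_\mr{L}\cong\det\mathscr{N}_\mr{L/X}$, and concludes by \autoref{thm}. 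The step you flagged as the main obstacle is exactly what the paper's displayed identity $\mr{d}_{2}^{p,n}=\mr{R}^{p}\Gamma(\alpha_{\mr{K}_\mr{L}}\otimes\mr{id}_{\mr{K}_\mr{L}^{\vee}}\otimes\mr{id}_{\mr{K}_\mr{L}})=\mr{R}^{p}\Gamma(\alpha_{\mr{K}_\mr{L}})$ handles, so no genuine divergence.
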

\begin{proof}
 Let $\mathscr{K} = \mr{K_{L}}^{s/t}$ and note that since $\mathscr{K}$ is a line bundle $$\alpha_{\mathscr{K}} = \mr{KS} \cup \mr{c_{1}}(\mathscr{K}) \in \mr{H}^{2}\mr{(L,\mathscr{T}_{L})},$$ hence $\alpha_{\mathscr{K}} = (s/t)\alpha_{\mr{K_{L}}}$ in $\mr{H^{2}(L,\mathscr{T}_{L})}$, and then $\tilde\delta_{n}(\mathscr{K})= (s/t)\tilde\delta_{n}(\mr{K_{L}})$, so the proof below shows we may suppose that $\mathscr{K}=\mr{K}_{\mr{L}}$ is the canonical bundle of $\mr{L}$. We would like to show that $\mr{d_{2}(\omega)=0}$ and apply the theorem. Notice, as in the proof of \cref{cor}, Serre duality implies that it suffices to show that $\mr{d}_{2}^{p,n}=0$. However, in the present case, we have \begin{equation*}\mr{d}_{2}^{p,n} =\mr{R}^{p}\Gamma(\alpha_{\mr{K}_{L}}\otimes \mr{id_{\mr{K}_\mr{L}^{\vee}}\otimes id_{K_{L}}}) =\mr{R}^{p}\Gamma(\alpha_{\mr{K}_{L}}) =0,\end{equation*} where the last equality follows from \cref{obviouscase}.
 \end{proof}
 \begin{corollary}
  Let $\mr{X}/k$ be a (projective) hyperkähler variety and let $i : \rm L \xhookrightarrow{} X$ be a smooth Lagrangian. Suppose that $\mr{K}_\mr{L}^{1/2}$ is a square root of the canonical bundle of $\mr{L}$. Then the local-to-global $\mr{Ext}$ spectral sequence  $$\mr{E}_{2}^{p,q}= \mr{H}^{p}(\mr{L},\Omega^{q}_\mr{L}) \Rightarrow \mathrm{Ext}^{p+q}(i_{*}\mr{K}_\mr{L}^{1/2},i_{*}\mr{K}_\mr{L}^{1/2})$$ degenerates (multiplicatively) on the second page.
 \end{corollary}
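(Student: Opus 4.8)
The plan is to observe that this corollary is the special case $s/t = 1/2$ of \cref{thm13m}, so essentially no new work is required. A square root $\mr{K}_\mr{L}^{1/2}$, assumed to exist by hypothesis, is precisely an \emph{existing} rational power of the canonical bundle $\mr{K}_\mr{L}$, in the sense demanded by the preceding theorem. Thus the plan is simply to verify that $\mr{K}_\mr{L}^{1/2}$ satisfies the hypotheses of \cref{thm13m} and then invoke it verbatim.

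Concretely, I would first recall from the proof of \cref{thm13m} that for any line bundle of the form $\mathscr{K} = \mr{K}_\mr{L}^{s/t}$ one has $\alpha_{\mathscr{K}} = (s/t)\,\alpha_{\mr{K}_\mr{L}}$ in $\mr{H}^{2}(\mr{L},\mathscr{T}_\mr{L})$, by linearity of the obstruction class in $\mr{c}_{1}$; taking $s/t = 1/2$ gives $\alpha_{\mr{K}_\mr{L}^{1/2}} = \tfrac{1}{2}\alpha_{\mr{K}_\mr{L}}$ and hence $\tilde\delta_{n}(\mr{K}_\mr{L}^{1/2}) = \tfrac{1}{2}\tilde\delta_{n}(\mr{K}_\mr{L})$. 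Next I would use \cref{obviouscase}, which records that $\mr{d}_{2}^{p,n}$ vanishes for $\mathscr{L} = \mr{K}_\mr{L} = \mr{det}\,\mathscr{N}_\mr{L/X}$, to conclude $\mr{R}^{p}\Gamma(\alpha_{\mr{K}_\mr{L}}) = 0$ and therefore $\mr{d}_{2}^{p,n} = 0$ for $\mathscr{K} = \mr{K}_\mr{L}^{1/2}$ as well. Serre duality, in the form established before \cref{mainthm}, then upgrades the vanishing of $\mr{d}_{2}^{p,n}$ to the vanishing of $\mr{d}_{2}^{p,1}$, which is exactly the statement $\mr{d}_{2}(\omega) = 0$ needed to feed into \cref{mainthm}.

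Having checked $\mr{d}_{2}(\omega) = 0$, I would apply \cref{mainthm} to obtain multiplicative degeneration on the second page, completing the argument. I do not anticipate any genuine obstacle here: the corollary is a direct specialisation, and the only point meriting a word is that a \emph{square root} is an instance of an existing rational power, so that the scaling identity $\alpha_{\mr{K}_\mr{L}^{1/2}} = \tfrac{1}{2}\alpha_{\mr{K}_\mr{L}}$ and the subsequent vanishing go through unchanged. If anything is subtle, it is purely bookkeeping: confirming that the half-integral Chern class computation respects the isomorphisms $\Omega^{q}_\mr{L} \cong \wedge^{q}\mathscr{N}_\mr{L/X}$ used to identify the second page, but this is inherited verbatim from the setup of \cref{thm13m} and requires no fresh input.
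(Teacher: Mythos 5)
Your proposal is correct and matches the paper exactly: the corollary is stated without separate proof precisely because it is the special case $s/t = 1/2$ of \cref{thm13m}, whose proof proceeds via the scaling identity $\alpha_{\mathscr{K}} = (s/t)\,\alpha_{\mr{K}_\mr{L}}$, the vanishing $\mr{d}_{2}^{p,n} = 0$ from \cref{obviouscase}, Serre duality to obtain $\mr{d}_{2}(\omega) = 0$, and an appeal to \cref{mainthm} --- all steps you reproduce faithfully. No gaps; your unfolding of the argument is exactly the paper's intended route.
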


 \section{Variants}\label{sec3}

\paragraph{The relative case.}

We briefly outline a relative version of the results obtained in the previous section.\\
$\hspace*{3mm}$It is evident that \cref{algkahler} generalises to any base scheme $\mr{S}$ of characteristic $0$, so we have a notion of hyperkähler schemes $\mr{X/S}$. Consider the following situation:
\[
\begin{tikzcd}
\mr{L} \arrow[r, hook, "i"] 
& \mr{X} \arrow[d, "p"]\\
& \mr{S},
\end{tikzcd}
\]
where $\mr{L}$ is a smooth Lagrangian in $\mr{X}$, and set  $p'= p\circ i$. Just as in the absolute case, we obtain $\mathscr{N}_\mr{L/X} \cong \Omega^{1}_\mr{L/S}$. If $\mr{X}$ is projetive, we have a relatively ample line bundle on $\mr{X}$, inducing a relative Kähler form $\omega \in \mr{H}^{1}(\mr{L},\Omega^{1}_\mr{L/S})$. We get Lefschetz operators $$\mr{R}p_{*}\omega : \mr{R}p'_{*}\Omega^{q}_\mr{L/S} \to \mr{R}p'_{*}\Omega^{q+1}_\mr{L/S}[1],$$ satisfying the hard Lefschetz theorem. Furthermore, $\mr{R}p'_{*}\Omega^{q}_\mr{L/S}$ are formal complexes, and their cohomology sheaves are locally free, hence Serre duality holds (see \cite{PMIHES_1968__35__107_0}). Therefore, we obtain the following generalisation of the absolute case considered in the previous paragraph:\3
\begin{theorem}
 Let $\mr{X/S}$ be a projecive hyperkähler variety, where $\mr{S}$ is of characteristic $0$, and let 
 \[
\begin{tikzcd}
\mr{L} \arrow[r, hook, "i"] 
& \mr{X} \arrow[d, "p"]\\
& \mr{S}.
\end{tikzcd}
\] 
be a smooth Lagrangian. Let $\omega: \mathscr{O}_\mr{L} \to \Omega_\mr{L/S}[1]$ be the relative Kähler form induced by a relatively ample line bundle and suppose $\mathscr{L} \in \mr{Pic(L)}$. Then the spectral sequence $$\mr{E}_{2}^{p,q} = \mr{R}^{p}p_{*}i_{*}\Omega^{q}_\mr{L/S} \Rightarrow \mr{R}^{p+q}p_{*}\mr{R}\shom(i_{*}\mathscr{L},i_{*}\mathscr{L})$$ degenerates (on $\mr{E}_{2}$) if and only if $\mr{d}_{2}\circ \mr{R}p_{*}\omega=0$, acting on $\mr{R}p'_{*}\mathscr{O}_\mr{L}$.
\end{theorem}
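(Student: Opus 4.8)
The plan is to transcribe the proof of \cref{thm} into the relative setting, replacing the vector spaces $\mr{H}^{p}(\mr{L},\Omega^{q}_\mr{L})$ by the locally free $\mathscr{O}_\mr{S}$-modules $\mr{R}^{p}p'_{*}\Omega^{q}_\mr{L/S}$ and the ground field $\C$ by $\mathscr{O}_\mr{S}$. The absolute argument uses only three structural inputs: multiplicativity of the spectral sequence, the hard Lefschetz isomorphisms for the Lefschetz operator $\mr{L}_{\omega}=\mr{R}p_{*}\omega$, and a perfect duality compatible with the differentials. All three survive here: multiplicativity is formal, hard Lefschetz for $\mr{R}p_{*}\omega$ is among the hypotheses, and relative Serre duality holds because the complexes $\mr{R}p'_{*}\Omega^{q}_\mr{L/S}$ are formal with locally free cohomology. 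Thus the whole proof should go through as a sequence of manipulations of maps of vector bundles on $\mr{S}$.

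The ``only if'' direction is immediate. For the converse I would first note that $\mr{d}_{2}\circ\mr{R}p_{*}\omega=0$ forces $\mr{d}_{r}\circ\mr{R}p_{*}\omega=0$ for every $r\ge 2$: for $r\ge 3$ the relevant target has negative form-degree and hence vanishes, so only the hypothesis on $\mr{d}_{2}$ carries content. The Leibniz rule then yields $\mr{d}_{r}(\mr{L}_{\omega}\alpha)=\mr{L}_{\omega}\mr{d}_{r}(\alpha)$, i.e.\ every differential commutes with $\mr{L}_{\omega}$.

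Next I would construct the relative primitive decomposition, writing $n$ for the relative dimension of $\mr{L}$ over $\mr{S}$. Because the cohomology sheaves are locally free and hard Lefschetz is an isomorphism of sheaves, the kernels of the powers $\mr{L}_{\omega}^{n-p-q+1}$ are locally free direct summands, and one gets the sheaf-level Lefschetz splitting exactly as in \cref{thm}. Deligne's diagram for the primitive part then shows that the $\mr{L}_{\omega}$-component $\mr{d}_{2}'$ of $\mr{d}_{2}$ vanishes, since its source dies under $\mr{L}_{\omega}^{n-p-q+1}$ while its target injects; hence $\mr{d}_{2}$ preserves primitives and kills the middle primitive piece. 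A downward induction on $p+q$ then gives $\mr{d}_{2}=0$, run through the pairing $\gamma\otimes\beta\mapsto\mr{R}p_{*}(\omega^{n-p-q-1}\cup\gamma\cup\beta)\in\mathscr{O}_\mr{S}$, a perfect pairing of locally free sheaves by relative Serre duality and hard Lefschetz; applying the same procedure inductively to each higher page completes the argument.

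The main obstacle is ensuring that every step of the absolute argument stays valid over $\mathscr{O}_\mr{S}$ rather than over a field: formation of kernels, the primitive splitting, non-degeneracy of the cup-product pairing, and the closing implication that a section pairing to zero against everything must itself vanish. This is exactly what local freeness of the cohomology sheaves and formality of $\mr{R}p'_{*}\Omega^{q}_\mr{L/S}$ provide, upgrading the fibrewise linear algebra to honest splittings and perfect pairings of vector bundles on $\mr{S}$. The remaining point to check is that the differentials are genuine $\mathscr{O}_\mr{S}$-linear maps respecting the Lefschetz grading, which follows from constructing the spectral sequence from a filtered complex over $\mr{S}$ as in \cref{rmkdecalage}.
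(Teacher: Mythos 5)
Your proposal is correct and takes essentially the same route as the paper, which proves this theorem precisely by noting that hard Lefschetz for $\mr{R}p_{*}\omega$, formality of the complexes $\mr{R}p'_{*}\Omega^{q}_{\mr{L/S}}$, and local freeness of their cohomology sheaves (hence relative Serre duality, via Deligne) allow the absolute argument of \cref{thm} to be transcribed as manipulations of locally free $\mathscr{O}_\mr{S}$-modules. The details you supply --- Leibniz rule, sheaf-level primitive splitting, the perfect pairing, and the downward induction --- are exactly the transcription the paper leaves implicit.
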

\2
\begin{theorem}
 Let $\mr{X/S}$ be a projecive hyperkähler variety, $\mr{S}$ is of characteristic $0$ as above, and let 
 \[
\begin{tikzcd}
\mr{L} \arrow[r, hook, "i"] 
& \mr{X} \arrow[d, "p"]\\
& \mr{S}.
\end{tikzcd}
\]
be a smooth Lagrangian. Let $\omega: \mathscr{O}_\mr{L} \to \Omega_\mr{L/S}[1]$ be the relative Kähler form induced by a relatively ample line bundle and suppose $\mathscr{L} \in \mr{Pic(L)}$ extends to the first infinitesimal neighbourhood of $\mr{L}$ in $\mr{X}$, such as $\mathscr{O}_\mr{L}$. Then the spectral sequence $$\mr{E}_{2}^{p,q} = \mr{R}^{p}p_{*}i_{*}\Omega^{q}_\mr{L/S} \Rightarrow \mr{R}^{p+q}p_{*}\mr{R}\shom(i_{*}\mathscr{L},i_{*}\mathscr{L})$$ degenerates on $\mr{E}_{2}$. 
\end{theorem}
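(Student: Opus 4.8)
The plan is to mirror the proof of \cref{cor}, replacing the global sections functor $\mr{R}\Gamma$ by the relative pushforward $\mr{R}p_{*}$ throughout, and to feed in the relative Serre duality and formality statements recorded just above. By the preceding theorem, degeneration on $\mr{E}_{2}$ is equivalent to the vanishing of $\mr{d}_{2}\circ \mr{R}p_{*}\omega$ on $\mr{R}p'_{*}\mathscr{O}_\mr{L}$. Since $\mr{R}p_{*}\omega : \mr{R}p'_{*}\Omega^{q}_\mr{L/S} \to \mr{R}p'_{*}\Omega^{q+1}_\mr{L/S}[1]$ carries the bottom row $q=0$ into the row $q=1$, this composite is governed by the relative differential $\mr{d}_{2}^{\bullet,1} : \mr{R}^{\bullet}p_{*}i_{*}\Omega^{1}_\mr{L/S} \to \mr{R}^{\bullet+2}p_{*}i_{*}\mathscr{O}_\mr{L}$, so it suffices to prove $\mr{d}_{2}^{\bullet,1}=0$.

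First I would dualise. Because the complexes $\mr{R}p'_{*}\Omega^{q}_\mr{L/S}$ are formal with locally free cohomology sheaves, relative Serre duality holds, and the compatibility of the differentials with the duality pairing—established exactly as in the absolute diagram preceding \cref{thm}, now with values in $\mr{R}p_{*}$ of the relative dualising complex—gives that $\mr{d}_{2}^{\bullet,1}=0$ if and only if the top-row differential $\mr{d}_{2}^{\bullet,n} : \mr{R}^{\bullet}p_{*}i_{*}\Omega^{n}_\mr{L/S} \to \mr{R}^{\bullet+2}p_{*}i_{*}\Omega^{n-1}_\mr{L/S}$ vanishes, where $n=\mr{dim}(\mr{L}/\mr{S})=\mr{codim}(\mr{L},\mr{X})$.

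Next I would identify this top differential with a deformation-obstruction class. The computation of \cref{sec1} is local and functorial in the base, so it transports verbatim to the relative situation: with $c=n$ one has $\tilde\delta_{n}(\mathscr{L})=\delta_{0}(i^{*}i_{*}\mathscr{L})\otimes\mr{id}_{\mathscr{L}^{\vee}}\otimes\mr{id}_{\mr{det}\mathscr{N}_\mr{L/X}}=\alpha_{\mathscr{L}}\otimes\mr{id}$, whence $\mr{d}_{2}^{p,n}=\mr{R}^{p}p_{*}(\alpha_{\mathscr{L}}\otimes\mr{id})$, precisely as in the proof of \cref{cor}. The hypothesis that $\mathscr{L}$ extends to the first infinitesimal neighbourhood of $\mr{L}$ in $\mr{X}$ forces $\alpha_{\mathscr{L}}=0$ by \cref{thm:th}; therefore $\mr{d}_{2}^{\bullet,n}=0$, and by the duality reduction $\mr{d}_{2}^{\bullet,1}=0$, so the spectral sequence degenerates on $\mr{E}_{2}$.

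The main obstacle is not the obstruction-class bookkeeping, which is purely local and formal, but rather setting up relative Serre duality so that the commutation of the differentials with the duality pairing becomes a statement about complexes over $\mr{S}$ rather than merely about fibres. This is exactly where the formality of $\mr{R}p'_{*}\Omega^{q}_\mr{L/S}$ and the local freeness of their cohomology sheaves enter: formality lets one split each complex into its cohomology sheaves and argue degree by degree, while local freeness guarantees that the pairing is perfect on the nose and not only after base change to points. Once these inputs are granted, the multiplicativity of $\mr{d}_{2}$ and the fact that it annihilates the top class propagate the vanishing across the page exactly as in the absolute case.
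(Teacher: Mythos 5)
Your proposal is correct and matches the paper's intended argument: the paper gives no separate proof of this relative statement, but derives it by observing that the relative Lefschetz operators satisfy hard Lefschetz and that the complexes $\mr{R}p'_{*}\Omega^{q}_{\mr{L/S}}$ are formal with locally free cohomology sheaves, so relative Serre duality holds and the absolute proofs carry over with $\mr{R}\Gamma$ replaced by $\mr{R}p_{*}$. Your reduction via the preceding relative theorem to $\mr{d}_{2}^{\bullet,1}=0$, the Serre-duality passage to the top row, and the identification $\mr{d}_{2}^{p,n}=\mr{R}^{p}p_{*}(\alpha_{\mathscr{L}}\otimes\mr{id})$, which vanishes by \cref{thm:th} since $\mathscr{L}$ extends to the first infinitesimal neighbourhood, is precisely that relativisation of the proof of \cref{cor}.
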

\2
\begin{theorem}
 Let $\mr{X/S}$ be a projecive hyperkähler variety, $\mr{S}$ is of characteristic $0$ as above, and let 
 \[
\begin{tikzcd}
\mr{L} \arrow[r, hook, "i"] 
& \mr{X} \arrow[d, "p"]\\
& \mr{S}.
\end{tikzcd}
\]
be a smooth Lagrangian. Let $\omega: \mathscr{O}_\mr{L} \to \Omega_\mr{L/S}[1]$ be the relative Kähler form induced by a relatively ample line bundle and suppose $\mathscr{K} \in \mr{Pic(L)}$ is any (existing) rational power of the canonical bundle of $\mr{L}$. Then the spectral sequence $$\mr{E}_{2}^{p,q} = \mr{R}^{p}p_{*}i_{*}\Omega^{q}_\mr{L/S} \Rightarrow \mr{R}^{p+q}p_{*}\mr{R}\shom(i_{*}\mathscr{K},i_{*}\mathscr{K})$$ degenerates on $\mr{E}_{2}$. 
\end{theorem}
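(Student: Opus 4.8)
The plan is to transport the proof of \cref{thm13m} to the relative setting, replacing every absolute object by its counterpart over $\mr{S}$ and invoking the relative degeneration criterion in place of \cref{thm}. As a first step I would reduce to the relative canonical bundle $\mr{K}_\mr{L/S} = \det \mathscr{N}_\mr{L/X}$. Writing $\mathscr{K} = \mr{K}_\mr{L/S}^{s/t}$, the relative Atiyah and Kodaira--Spencer classes are furnished by the relative conormal sequence $0 \to \mathscr{N}^{\vee}_\mr{L/X} \to i^{*}\Omega^{1}_\mr{X/S} \to \Omega^{1}_\mr{L/S} \to 0$, and since $\mathscr{K}$ is a line bundle its obstruction class is $\alpha_\mathscr{K} = \mr{KS} \cup \mr{c}_{1}(\mathscr{K})$, which is linear in $\mr{c}_{1}$. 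Hence $\alpha_\mathscr{K} = (s/t)\,\alpha_{\mr{K}_\mr{L/S}}$ and the induced top differentials obey $\tilde\delta_{n}(\mathscr{K}) = (s/t)\,\tilde\delta_{n}(\mr{K}_\mr{L/S})$; moreover, a factor pulled back from $\mr{S}$ contributes trivially to the relative Atiyah class, so it is harmless whether one reads ``canonical bundle'' as $\mr{K}_\mr{L/S}$ or $\mr{K}_\mr{L}$. It therefore suffices to treat $\mathscr{K} = \mr{K}_\mr{L/S}$.

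Next I would apply the relative degeneration criterion established above (the relative form of \cref{thm}): the spectral sequence degenerates on $\mr{E}_{2}$ if and only if $\mr{d}_{2} \circ \mr{R}p_{*}\omega = 0$ on $\mr{R}p'_{*}\mathscr{O}_\mr{L}$. This composite is governed by the single differential in internal degree $q=1$, namely $\mr{d}_{2}^{\cdot,1} : \mr{R}p'_{*}\Omega^{1}_\mr{L/S} \to \mr{R}p'_{*}\mathscr{O}_\mr{L}[2]$. Relative Serre duality, valid here because the complexes $\mr{R}p'_{*}\Omega^{q}_\mr{L/S}$ are formal with locally free cohomology, identifies $\mr{d}_{2}^{\cdot,1}$ with the dual of the top differential $\mr{d}_{2}^{\cdot,n}$, where $n = \dim(\mr{L}/\mr{S}) = \mr{codim}(\mr{L},\mr{X})$. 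Consequently it is enough to prove that the relative top differential $\mr{d}_{2}^{\cdot,n}$ vanishes.

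To identify this differential I would use the relative form of the proposition expressing $\mr{d}_{2}^{p,q}$ as $\mr{R}^{p}p'_{*}\tilde\delta_{q}(\mathscr{K})$, together with $\delta_{0}(i^{*}i_{*}\mathscr{K}) = \alpha_\mathscr{K}$. Since $\mathscr{K} = \mr{K}_\mr{L/S} = \det\mathscr{N}_\mr{L/X}$, the tensor factors $\mr{id}_{\mathscr{K}^{\vee}} \otimes \mr{id}_{\det\mathscr{N}_\mr{L/X}}$ appearing in $\tilde\delta_{n}$ cancel, leaving $\mr{d}_{2}^{\cdot,n} = \mr{R}p'_{*}(\alpha_{\mr{K}_\mr{L/S}})$. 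As $\mr{K}_\mr{L/S}$ is a line bundle, hence perfect, the relative analogue of \cref{lemmavanishing} gives $i_{*}\alpha_{\mr{K}_\mr{L/S}} = 0$ in $\mr{D}(\mr{X})$; pushing this zero morphism forward along $\mr{R}p'_{*} = \mr{R}p_{*}\circ i_{*}$ yields $\mr{d}_{2}^{\cdot,n} = 0$. This is precisely the relative incarnation of the observation in \cref{obviouscase} for $\det\mathscr{N}_\mr{L/X}$. Combining with the previous paragraph and the relative criterion, the spectral sequence degenerates on $\mr{E}_{2}$, as required.

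The hard part will be confirming that the homological-algebra inputs survive base change to $\mr{S}$: the identification of $\mr{d}_{2}$ with the relative deformation-obstruction classes (via relative décalage and relative Grothendieck--Verdier duality, as in \cref{rmkdecalage}), the vanishing $i_{*}\alpha_{\mathscr{F}} = 0$ of \cref{lemmavanishing}, and relative Serre duality. Each of these is either local on $\mr{X}$ or a formal consequence of the filtered-derived-category machinery, and all are compatible with the base; the only genuine subtlety is that the relative duality and décalage used to realise the spectral sequence remain valid in families, which is guaranteed precisely by the formality and local freeness of $\mr{R}p'_{*}\Omega^{q}_\mr{L/S}$ asserted in the relative setup.
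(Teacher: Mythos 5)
Your proposal is correct and follows essentially the same route the paper intends: the paper states this theorem without a separate proof precisely because, as you do, one transports the proof of \cref{thm13m} to the relative setting, using the relative degeneration criterion, relative Serre duality (justified by the asserted formality and local freeness of the cohomology of $\mr{R}p'_{*}\Omega^{q}_{\mr{L/S}}$), the linearity $\alpha_{\mathscr{K}}=(s/t)\,\alpha_{\mr{K}_{\mr{L/S}}}$ to reduce to the canonical bundle, and the vanishing of the top differential via \cref{lemmavanishing} as in \cref{obviouscase}. Your refinement of phrasing the last step as $i_{*}\alpha_{\mr{K}_{\mr{L/S}}}=0$ in $\mr{D^{b}(X)}$ before applying $\mr{R}p_{*}$ (using $\mr{R}p'_{*}=\mr{R}p_{*}\circ i_{*}$ and exactness of $i_{*}$, so the relative $\mr{d}_{2}$ is induced by $i_{*}\tilde\delta_{n}$) is exactly the mechanism underlying the paper's absolute argument, so no new idea is required.
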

\paragraph{Coisotropic subvarieties.}Another possible generalisation, suggested to us by Sabin Cautis, involves replacing Lagrangian by coisotropic. So let $i : \mr{L \xhookrightarrow{} X}$ be a coisotropic subvariety, then it has a characteristic foliation $p:\mr{L} \to \mr{B}$, which we assume smooth projective, where $\mr{B}$ is the space of leaves. Then $\mathscr{N}_\mr{L/X} \cong \Omega_\mr{L/B}$ and we get a spectral sequence $$\mr{E}_{2}^{p,q} = \mr{R}^{p}p_{*}\Omega^{q}_\mr{L/B} \Rightarrow \mr{R}^{p+q}p_{*}(\mathscr{L}^{\vee}\otimes i^{!}i_{*}\mathscr{L}).$$ One may speculate to what extent it degenerates. Our approach doesn't immediately generalise to this setting, and the reason is that $\mathscr{L}^{\vee}\otimes i^{!}i_{*}\mathscr{L}$ doesn't seem to carry any (natural) algebra structure, so while its cohomology sheaves have cup products, it's not clear if the differentials of the spectral sequence will be compatible with these cup products.

\end{document}